\documentclass[a4paper]{amsart}

\pdfoutput=1

\usepackage[utf8x]{inputenc}
\usepackage{amsmath, amsfonts, amssymb, amscd, constants}
\usepackage[pdftex]{graphicx}
\usepackage{color}
\usepackage{graphicx}
\DeclareGraphicsExtensions{.png,.pdf}

\usepackage[a4paper,text={14cm,21cm},centering]{geometry}

\usepackage[small,bf]{caption}
\setlength{\captionmargin}{20pt}
\newtheorem{theorem}{Theorem}[section]
\newtheorem{corollary}{Corollary}[section]

\newtheorem{lemma}{Lemma}[section]
\newtheorem{proposition}{Proposition}[section]

\theoremstyle{definition}
\newtheorem{remark}{Remark}[section]

\numberwithin{equation}{section}


\newcommand{\calC}{\mathcal{C}}
\newcommand{\calD}{\mathcal{D}}

\newcommand{\calF}{\mathcal{F}}
\newcommand{\calG}{\mathcal{G}}

\newcommand{\calI}{\mathcal{I}}

\newcommand{\calL}{\mathcal{L}}

\newcommand{\calS}{\mathcal{S}}

\newcommand{\frF}{\mathfrak{F}}

\newcommand{\bbR}{\mathbb{R}}
\newcommand{\bbS}{\mathbb{S}}

\newcommand{\bbZ}{\mathbb{Z}}

\newcommand{\bfZ}{\mathbf{Z}}

\newcommand{\bfb}{\mathbf{b}}

\newcommand{\comp}{{\rm c}}
\newcommand{\defby}{\doteqdot}
\newcommand{\normsup}[1]{\|#1\|_{\scriptscriptstyle\infty}}
\newcommand{\normI}[1]{\|#1\|_{\scriptscriptstyle 1}}
\newcommand{\normII}[1]{\|#1\|_{\scriptscriptstyle 2}}
\newcommand{\setof}[2]{\{#1\,:\,#2\}}

\newcommand{\inprod}[2]{(#1,#2)}
\newcommand{\bk}[1]{\langle#1\rangle}
\newcommand{\betac}{\beta_{\rm\scriptscriptstyle c}}
\newcommand{\ub}{\bfb}
\newcommand{\ubo}{\ub(\omega)}
\newcommand{\uG}{\mathbf{\Gamma}}
\newcommand{\GM}{\mu^\omega_{\Lambda_n;\beta}}
\newcommand{\minusconn}{\stackrel{-}{\longleftrightarrow}}

\title{A Finite-Volume Version of Aizenman-Higuchi Theorem for the 2d Ising Model}
\author{Loren Coquille \and Yvan Velenik}
\address{Department of Mathematics, University of Geneva, 2-4 rue du Lièvre, Case Postale 64, CH-1211 Genève 4. }
\email{Loren.Coquille@unige.ch, Yvan.Velenik@unige.ch}

\begin{document}

\begin{abstract}
In the late 1970s, in two celebrated papers, Aizenman and Higuchi independently established that all infinite-volume Gibbs measures of the two-dimensional ferromagnetic nearest-neighbor Ising model at inverse temperature $\beta\geq 0$ are of the form $\alpha\mu^+_\beta + (1-\alpha)\mu^-_\beta$, where $\mu^+_\beta$ and $\mu^-_\beta$ are the two pure phases and $0\leq\alpha\leq 1$. We present here a new approach to this result, with a number of advantages: (i) We obtain an optimal finite-volume, quantitative analogue (implying the classical claim); (ii) the scheme of our proof seems more natural and provides a better picture of the underlying phenomenon; (iii) this new approach might be applicable to systems for which the classical method fails.
\end{abstract}

\keywords{Ising model -- Gibbs states -- translation invariance}

\maketitle

\section{Introduction and results}

We denote by $\Omega\defby\{-1,1\}^{\bbZ^2}$ the set of spin configurations. Let $\Lambda$ be a finite subset of $\bbZ^2$, which we denote by $\Lambda\Subset\bbZ^2$. The \textsf{finite-volume Gibbs measure} in $\Lambda$ for the two-dimensional nearest-neighbor ferromagnetic (2d n.n.f.) Ising model, with \textsf{boundary condition} $\omega\in\Omega$ and at \textsf{inverse temperature} $\beta\geq 0$, is the probability measure on $\Omega$ (with the associated product $\sigma$-algebra) defined by
\[
\mu^\omega_{\Lambda;\beta} (\sigma) \defby
\begin{cases}
\frac1{\bfZ^\omega_{\Lambda;\beta}} e^{-\beta H_{\Lambda}(\sigma)}
&
\text{if $\sigma_i=\omega_i$, for all $i\in\Lambda^\comp$,}\\
0 & \text{otherwise,}
\end{cases}
\]
where $\Lambda^\comp\defby\bbZ^2\setminus\Lambda$, and the normalization constant $\bfZ^\omega_{\Lambda;\beta}$ is the partition function. The Hamiltonian in $\Lambda$ is given by
\[
H_{\Lambda}(\sigma) \defby - \sum_{\substack{\{i,j\}\cap\Lambda\neq\varnothing\\\normI{i-j}=1}} \sigma_i\sigma_j.
\]
In particular, we denote by $\mu^+_{\Lambda;\beta}$, resp.\ $\mu^-_{\Lambda;\beta}$, the measures obtained using $\omega\equiv 1$, resp.\ $\omega\equiv -1$.

For $A\subset\bbZ^2$, we denote by $\calF_A$ the $\sigma$-algebra of all events depending only on the spins inside $A$.
A probability measure $\mu$ on $\Omega$ is an \textsf{infinite-volume Gibbs measure} for the 2d n.n.f.\ Ising model at inverse temperature $\beta$ if and only if it satisfies the DLR equation
\begin{equation}\label{DLR}
\mu (\cdot\, | \calF_{\Lambda^\comp})(\omega) = \mu_{\Lambda;\beta}^\omega, \qquad\text{for $\mu$-a.e.\ $\omega$, and all $\Lambda\Subset\bbZ^2$.}
\end{equation}
We denote by $\calG_\beta$ the set of all such measures.

It is easy to prove that the sequences of measures $(\mu_\Lambda^+)_\Lambda$ and $(\mu_\Lambda^-)_\Lambda$ converge weakly along any increasing sequence of finite sets $\Lambda\nearrow\bbZ^2$, the limit being independent of the sequence chosen. We denote by $\mu^+_\beta$ and $\mu^-_\beta$ the corresponding limits; these two measures are called the pure phases, and referred to as the $+$ and $-$ states, and are easily seen to belong to $\calG_\beta$.  In particular, $\calG_\beta\neq\varnothing$, for all $\beta\geq 0$. 

It is a classical result, valid in a much broader context, that the set $\calG_\beta$ is a simplex (see~\cite{Geo88} for a general reference on Gibbs measures). However, determining explicitly this set for a nontrivial model is a very delicate question.

For the 2d n.n.f.\ Ising model, it is not difficult to prove that $\mu_\beta^+$ and $\mu_\beta^-$ are always extremal elements of $\calG_\beta$, and that the latter set contains a unique element if and only if $\mu^+_\beta=\mu^-_\beta$. It can be proved that the latter condition is satisfied if and only if $\beta\leq\betac$ (the difficult part is the behavior at $\betac$), where $0<\betac<\infty$ is the \textsf{inverse critical temperature}. It follows that, in the non-uniqueness regime $\beta>\betac$, $\calG_\beta$ contains at least the two distinct extremal measures $\mu^+_\beta$ and $\mu^-_\beta$.

In 1975, Messager and Miracle-Sole~\cite{MesMir75} proved that all \emph{translation invariant} infinite-volume Gibbs measures of the 2d n.n.f.\ Ising model are convex combinations of $\mu^+_\beta$ and $\mu^-_\beta$; an earlier result on that problem was obtained by Gallavotti and Miracle-Sole for large enough $\beta$~\cite{GalMir72}. (The corresponding claim for general 2d systems at very low temperature was obtained later in~\cite{DobShl85}.)

At this stage, the problem was thus reduced to proving that there are \emph{no} translation non-invariant infinite-volume Gibbs measures in this model. Important progress was made in 1979 by Russo~\cite{Rus79}, who proved that an infinite-volume Gibbs measure for the 2d n.n.f.\ Ising model which is invariant under translations along one direction is necessarily invariant under all translations. Building up on these earlier results, 
Aizenman~\cite{Aiz80} and Higuchi~\cite{Hig81} (see also~\cite{GeoHig00} for a more recent variant) independently established, in the late 1970s, that all infinite-volume Gibbs measures of the 2d n.n.f.\ Ising model are translation invariant, thus providing a complete description of the set $\calG_\beta$.

\medskip
The goal of the present work is to introduce a new approach to this result, with a number of distinctive advantages:
\begin{itemize}
\item We obtain a finite-volume, quantitative analogue (of course, implying the classical claim). Our error estimate is of the correct order.
\item The scheme of our proof seems more natural, and provides a clear picture of the underlying phenomenon.
\item This new approach relying on other properties of the underlying model, it might be extendable to systems for which the classical approach does not apply.
\end{itemize}
Concerning the last point, it is worth pointing out that one of the main ingredients necessary in order to build up a proof along the lines we use here is the availability of a sharp control of interface properties, such as provided by the Ornstein-Zernike theory developed in~\cite{CamIof02,CamIofVel03,CamIofVel08}. In particular, such estimates are available, e.g., for 2d Potts models below the critical temperature, for which even establishing the infinite-volume claim is an open problem. One of the main difficulties in this program, though, is that the geometry of interfaces is much more complicated in systems with more than 2 phases (in the Ising case, interfaces are always lines connecting two points on the boundary). Such an extension, which requires substantial adaptations of several steps in the arguments below, is in progress~\cite{CDIVinprogress}.

There is one drawback in our approach: It does not imply uniqueness at the critical temperature, while this can be extracted from the classical Aizenman-Higuchi result, e.g., using~\cite{BriLeb86}. However, this  should not be surprising, since we expect that it should also apply to models for which the transition is first-order, such as the 2d Potts model with $q\geq 5$ spin states. In that case, one expects $\calG_{\betac}$ to be the simplex with extremal points given by all $q$ low-temperature pure phases \emph{as well as the high temperature phase}.

Note that the absence of translation non-invariant infinite-volume Gibbs measures is specific to the two-dimensional model: In higher dimensions, it was proved by Dobrushin~\cite{Dob72} that such measures exist at sufficiently large values of $\beta$ (however, all translation invariant measures are still convex combinations of $\mu^+_\beta$ and $\mu^-_\beta$ in this case~\cite{Bod06}).
The main difference between the 2d case and its higher-dimensional counterparts is that interfaces in 2d are one-dimensional objects and as such undergo unbounded fluctuations (with diffusive scaling) at any $\beta<\infty$, while horizontal interfaces in higher dimensions are rigid at large enough values of $\beta$. Actually, the existence of a Brownian bridge diffusive limit in 2d has only been established~\cite{GreIof05} for a single interface, resulting from the so-called Dobrushin boundary condition (earlier results restricted to large $\beta$ include~\cite{Gal72} and~\cite{Hig79}). The behavior of the system under a general boundary condition is the main topic of the present work.

\medskip
We set $\Lambda_r \defby \{-\lfloor r\rfloor,\ldots,\lfloor r \rfloor\}^2$. For $\Lambda\Subset\bbZ^2$, we denote by $\bk{\cdot}^\omega_{\Lambda;\beta}$ the expectation under the (finite-volume) measure $\mu^\omega_{\Lambda;\beta}$ and by $\bk{\cdot}^+_{\beta}$, resp. $\bk{\cdot}^-_{\beta}$, the expectation under the (infinite-volume) measure $\mu^+_{\beta}$, resp. $\mu^-_\beta$.

\smallskip
We shall make use of the following notation: If $R_1$, $R_2$ and $R_3$ are three expressions, depending on various parameters ($\beta$, $n$, $\omega$, etc.), and we write $R_1 = R_2 + O_\beta(R_3)$, this means that there exists a constant $C(\beta)<\infty$, depending on $\beta$ only, such that $|R_1-R_2|\leq C(\beta)R_3$. 

\smallskip
Our main result is the following. The proof can be found in Section~\ref{sec_ProofMain}.

\begin{theorem}\label{thm_main}
Let $\beta>\betac$, $\xi<1/2$ and $\omega\in\Omega$. Then, for any $0<\delta<1/2-\xi$, there exists $n_0=n_0(\beta,\xi,\delta)$ such that, for all $n>n_0$, there exists a constant $\alpha^{n,\omega}(\beta)\in[0,1]$ such that, for all $\calF_{\Lambda_{n^\xi}}$-measurable function $f$,
\[
\bk{f}^\omega_{\Lambda_n;\beta}= \alpha^{n,\omega}\bk{f}^+_\beta + (1-\alpha^{n,\omega})\bk{f}^-_\beta+O_\beta\bigl( 
\normsup{f}\, n^{-\delta}\bigr).
\]
\end{theorem}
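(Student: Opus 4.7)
The plan is to analyse, for each $\omega$, the system of open contours (``interfaces'') induced inside $\Lambda_n$. The boundary $\partial\Lambda_n$ decomposes into maximal arcs of constant sign; for any $\sigma$ drawn from $\mu^\omega_{\Lambda_n;\beta}$, the dual edges separating opposite spins organise into a family of closed contours (irrelevant at macroscopic scales) together with open contours $\gamma_1,\ldots,\gamma_N$ joining the endpoints of those arcs. These open contours partition $\Lambda_n$ into islands, each carrying a well-defined $\pm$-sign. Let $\calE$ be the event that none of the open contours intersects the slightly enlarged inner box $\Lambda_{n^\xi+1}$; on $\calE$, the sub-box $\Lambda_{n^\xi}$ lies in a single island of definite sign $\varepsilon(\sigma)\in\{+,-\}$. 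Setting
\[
\alpha^{n,\omega} \defby \mu^\omega_{\Lambda_n;\beta}\bigl(\calE\cap\{\varepsilon=+\}\bigr),
\]
the target expansion reduces, via the trivial bound $|f|\leq\normsup{f}$ on $\calE^c$, to two assertions: (a) $\mu^\omega_{\Lambda_n;\beta}(\calE^c)=O_\beta(n^{-\delta})$, and (b) conditionally on $\calE\cap\{\varepsilon=\pm\}$, the expectation of $f$ differs from $\bk{f}^\pm_\beta$ by at most $\normsup{f}\cdot n^{-\delta}$.

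Estimate (a) is the heart of the matter and I would extract it from the Ornstein--Zernike theory of~\cite{CamIof02,CamIofVel03,CamIofVel08}. A typical open contour joining two boundary points at distance $\ell$ has Gaussian transverse fluctuations of order $\sqrt{\ell}$, so the probability it passes within $n^\xi$ of the centre is of order $n^{\xi}/\sqrt{\ell}$. Summing over contours, and using that short boundary arcs produce contours with tight exponential length-tails (so that only contours of length comparable to $n$ can plausibly reach the centre), one gets $\mu^\omega_{\Lambda_n;\beta}(\calE^c)\leq C(\beta)\,n^{-(1/2-\xi)+o(1)}$, matching the theorem's claim for every $\delta<1/2-\xi$ (and, because diffusive fluctuations are genuinely $\sqrt{n}$, this is sharp, explaining the exponent in the statement).

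For (b), I would condition on $\calE$, on the whole open-contour system $\uG=(\gamma_1,\ldots,\gamma_N)$, and on $\varepsilon=+$ (the minus case is symmetric). The residual measure restricted to the island $\calI$ containing $\Lambda_{n^\xi}$ is an Ising measure on $\calI$ with $+$ boundary along the inner side of the contours bounding $\calI$ (since contours mark sign changes and the island is labeled $+$). Since $\uG$ misses $\Lambda_{n^\xi+1}$ and, by the same OZ heuristic, typically stays at distance $\gg n^\xi$ from the inner box, the exponential decay of correlations for $\mu^+_\beta$ at $\beta>\betac$ makes $|\bk{f\mid\uG,\varepsilon=+}^\omega_{\Lambda_n;\beta}-\bk{f}^+_\beta|$ stretched-exponentially small, far below $n^{-\delta}$. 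The main obstacle will be step (a): existing Ornstein--Zernike estimates concern a single interface with Dobrushin-type endpoints, whereas here one needs uniform passage-probability bounds for the entire random contour system produced by an arbitrary, possibly very irregular $\omega$, with neighbouring contours interacting entropically -- this uniformity is the substantial technical core of the proof.
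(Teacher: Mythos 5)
Your outline reproduces the paper's strategy in broad strokes---decompose according to whether the open-contour system reaches the support of $f$, and use exponential relaxation in the pure phase on the complementary event---but the step you yourself defer as ``the substantial technical core'', your estimate (a), is precisely where the argument has a genuine gap, and the route you sketch for it (a union bound of passage probabilities $n^{\xi}/\sqrt{\ell}$ over the contours) fails as stated. The number of pairs $(b,b')$ of boundary endpoints whose chord $\overline{bb'}$ passes near the centre can be polynomially large in $n$, and in a single configuration the number of contours that cross the central region is not a priori bounded; since the passage probability for each candidate is only polynomially small ($n^{\xi-1/2}$), summing it over polynomially many candidates destroys the bound. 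The paper closes this gap with a two-stage reduction that is absent from your proposal: first (Lemma 3.1) contours whose chord stays at distance $\geq 2n^{a}$ from the centre are \emph{exponentially} unlikely to cross, by the sharp triangle inequality for $\tau_\beta$ applied to the random-line weights; second, and crucially (Lemma 3.2), the probability of having \emph{two or more} crossing contours is exponentially small, because two crossings cost $e^{-\tau_\beta(b_1'-b_1)-\tau_\beta(b_2'-b_2)}\leq e^{-cn}$ in the weights, while the normalization $\bfZ^{\omega}_{\Lambda_n;\beta}/\bfZ^{+}_{\Lambda_n;\beta}$ only compensates for one interface up to a factor $e^{Cn^{a}}$ (the relevant endpoints being within $O(n^{a})$ of one another). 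Only after this reduction to a \emph{single} crossing contour is the diffusive estimate $O(n^{\xi-a/2})$ applied (Lemma 3.3, using genuine Ornstein--Zernike prefactors from Lemma A.1 and \eqref{eq_App_BK_bis}, not a Gaussian heuristic), so no union bound over a large contour family is ever needed.

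A second, smaller defect: your event $\calE$ only requires the contours to avoid $\Lambda_{n^\xi+1}$, so the constant-sign island containing $\Lambda_{n^\xi}$ may have boundary at distance $1$ from the support of $f$; the relaxation estimate \eqref{eq_App_expRelax} then yields an $O(1)$ error rather than a stretched-exponentially small one. You must demand avoidance of a dilated box (the paper uses $\Lambda_{2n^\xi}$, giving distance $n^{\xi}$ and error $e^{-cn^{\xi}}$), which costs nothing since the fluctuation bound is insensitive to this change of scale. With these two repairs---the ``at most one crossing contour'' lemma and the enlarged safety margin---your scheme becomes the paper's proof.
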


\noindent It is not difficult to deduce the Aizenman-Higuchi Theorem from Theorem~\ref{thm_main}.

\begin{corollary}\label{cor_AH}
For any $\beta>\betac$, $\calG_\beta=\setof{\alpha\mu^+ + (1-\alpha)\mu^-}{0\leq \alpha\leq 1}$.
\end{corollary}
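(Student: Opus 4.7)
The plan is to obtain the corollary as an almost immediate averaging consequence of Theorem~\ref{thm_main}, integrating it against an arbitrary Gibbs measure via the DLR equation~\eqref{DLR}. The inclusion $\setof{\alpha\mu^+_\beta + (1-\alpha)\mu^-_\beta}{0\leq\alpha\leq 1}\subset\calG_\beta$ is trivial since $\calG_\beta$ is convex and contains $\mu^\pm_\beta$, so only the reverse inclusion requires work.

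Fix $\mu\in\calG_\beta$, choose any exponents $\xi<1/2$ and $0<\delta<1/2-\xi$, and consider an arbitrary bounded local function $f$, say $\calF_{\Lambda_R}$-measurable. For every $n$ large enough that $n^\xi\geq R$ and $n>n_0(\beta,\xi,\delta)$, the function $f$ is $\calF_{\Lambda_{n^\xi}}$-measurable, so Theorem~\ref{thm_main} supplies, for each boundary condition $\omega$, a number $\alpha^{n,\omega}=\alpha^{n,\omega}(\beta)\in[0,1]$ (independent of $f$) such that
\[
\bk{f}^\omega_{\Lambda_n;\beta} = \alpha^{n,\omega}\bk{f}^+_\beta + (1-\alpha^{n,\omega})\bk{f}^-_\beta + O_\beta\bigl(\normsup{f}\,n^{-\delta}\bigr).
\]
Applying the DLR equation with $\Lambda=\Lambda_n$ and integrating against $\mu$ yields
\[
\mu(f) = \int \bk{f}^\omega_{\Lambda_n;\beta}\,d\mu(\omega) = \bar\alpha_n\bk{f}^+_\beta + (1-\bar\alpha_n)\bk{f}^-_\beta + O_\beta\bigl(\normsup{f}\,n^{-\delta}\bigr),
\]
where $\bar\alpha_n\defby\int\alpha^{n,\omega}\,d\mu(\omega)\in[0,1]$. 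The crucial observation is that the sequence $(\bar\alpha_n)$ depends on $\mu$ and $\beta$ but not on $f$.

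By compactness of $[0,1]$, extract a subsequence $n_k\to\infty$ along which $\bar\alpha_{n_k}\to\alpha$ for some $\alpha\in[0,1]$; this subsequence is chosen once and for all, independently of $f$. Letting $k\to\infty$ in the displayed identity gives
\[
\mu(f) = \alpha\bk{f}^+_\beta + (1-\alpha)\bk{f}^-_\beta
\]
for every bounded local $f$. Since bounded local functions separate probability measures on $(\Omega,\calF)$, we conclude $\mu=\alpha\mu^+_\beta+(1-\alpha)\mu^-_\beta$, as required.

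There is no real obstacle here beyond a bookkeeping check that $\omega\mapsto\alpha^{n,\omega}$ is measurable (so that the average $\bar\alpha_n$ makes sense), which is implicit in the construction underlying Theorem~\ref{thm_main}; the slight subtlety worth emphasizing is that the limit coefficient $\alpha$ must be the same for all test functions $f$, but this is automatic because $\bar\alpha_n$ does not depend on $f$, so a single diagonal-free subsequence extraction suffices.
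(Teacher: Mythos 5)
Your argument is correct and is essentially the proof given in the paper: both integrate the estimate of Theorem~\ref{thm_main} against $\mu$ via the DLR equation and pass to the limit over local test functions. The only difference is how the limiting coefficient is identified: instead of extracting a convergent subsequence of $\bar\alpha_n$ by compactness, the paper applies the estimate to the single test function $g=\sigma_0$ to deduce $\bar\alpha_n = (m^\star_\beta+\mu(\sigma_0))/(2m^\star_\beta) + O_\beta(n^{-\delta})$, which gives outright convergence and an explicit formula for $\alpha$; your subsequence extraction is equally valid for the corollary as stated, just slightly less informative.
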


It is easy to check that the estimate we have on the error term in Theorem~\ref{thm_main} is essentially optimal (and could be made optimal with a little more care in the estimates, replacing the box $\Lambda_{n^a}$ in the proof by a box $\Lambda_{\epsilon n}$ with $\epsilon$ sufficiently small).
\begin{proposition}\label{prop_optimal}
Let $\beta>\betac$. There exist a local function $f$ and a constant $c=c(\beta)>0$ such that, for all $n$ large enough, one can find $\omega\in\Omega$ with
\[
\inf_{\alpha\in[0,1]} \bigl |\bk{f}^\omega_{\Lambda_n;\beta} - \alpha^{n,\omega}\bk{f}^+_\beta - (1-\alpha^{n,\omega})\bk{f}^-_\beta \bigr| \geq c n^{-1/2}.
\]
\end{proposition}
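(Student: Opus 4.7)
My plan is to exploit Dobrushin boundary conditions together with a local function that is invariant under the global spin flip. I take the two-site observable $f = \sigma_{(0,0)}\,\sigma_{(0,1)}$ and let $\omega\in\Omega$ be the Dobrushin boundary condition $\omega_i=+1$ if $i_2\geq 1$ and $\omega_i=-1$ if $i_2\leq 0$. Because $f$ is invariant under $\sigma\mapsto-\sigma$, the spin-flip symmetry of the model forces $\bk{f}^+_\beta = \bk{f}^-_\beta =: \tau(\beta)$, so for every $\alpha\in[0,1]$ one has $\alpha\,\bk{f}^+_\beta + (1-\alpha)\,\bk{f}^-_\beta = \tau(\beta)$, and the infimum in the statement collapses to the single quantity $|\bk{f}^\omega_{\Lambda_n;\beta} - \tau(\beta)|$. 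Writing $f = 1 - 2\,\mathbf{1}_{\{\sigma_{(0,0)}\ne\sigma_{(0,1)}\}}$, the goal reduces to the one-sided estimate
\[
\mu^\omega_{\Lambda_n;\beta}\bigl(\sigma_{(0,0)}\ne\sigma_{(0,1)}\bigr) \;-\; \mu^+_\beta\bigl(\sigma_{(0,0)}\ne\sigma_{(0,1)}\bigr) \;\geq\; c(\beta)\,n^{-1/2}.
\]

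To extract the $n^{-1/2}$ discrepancy, I would decompose the left-hand side according to whether the dual edge $e=\{(0,0),(0,1)\}$ lies on the main interface $\Gamma$ separating the $+$-cluster anchored at the top of $\partial\Lambda_n$ from the $-$-cluster anchored at the bottom (defined via the low-temperature or random-cluster contour representation). On the event $\{e\notin\Gamma\}$, FKG together with large-deviation estimates on the distance from $\Gamma$ to the origin allow one to couple the local law near the origin with either $\mu^+_\beta$ or $\mu^-_\beta$ (which give the same disagreement probability by symmetry) up to an essentially exponentially small error. Hence this part matches $\mu^+_\beta(\sigma_{(0,0)}\ne\sigma_{(0,1)})$ to leading order, and the remaining excess is precisely $\mu^\omega_{\Lambda_n;\beta}(e\in\Gamma)$.

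For this last term I would invoke the sharp Ornstein--Zernike and invariance-principle results of Campanino--Ioffe--Velenik and Greenberg--Ioffe~\cite{CamIof02,CamIofVel03,CamIofVel08,GreIof05}, according to which the Dobrushin interface behaves diffusively: its rescaled midpoint converges to a Brownian bridge, and the local lattice density of $\Gamma$ at any fixed bulk edge is of order $n^{-1/2}$. A two-sided local-CLT bound of this kind immediately yields $\mu^\omega_{\Lambda_n;\beta}(e\in\Gamma)\geq c(\beta)\,n^{-1/2}$, which is the required lower bound and, in particular, shows that the exponent $1/2$ is tight. The main obstacle is exactly this step: whereas the matching \emph{upper} bound on the interface's local density follows rather softly from diffusive scaling, the \emph{lower} bound in the discrete setting requires the full Ornstein--Zernike machinery valid throughout the whole regime $\beta>\betac$. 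All remaining ingredients---the symmetry reduction, the identity $f = 1 - 2\,\mathbf{1}_{\{\sigma_{(0,0)}\ne\sigma_{(0,1)}\}}$, and the exponential decoupling far from $\Gamma$---are elementary given the tools the paper already relies on.
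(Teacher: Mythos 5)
There is a genuine gap in your argument, located exactly where you place the difficulty but also one step earlier. First, the decomposition according to whether the dual edge $e$ belongs to the Dobrushin interface $\Gamma$ does not produce an exponentially small error on the complementary event: the interface can pass at lattice distance $k=O(1)$ from the origin \emph{without} containing $e$, and under the diffusive scaling this happens with probability of order $n^{-1/2}$ for each fixed $k$. On such configurations the conditional law of $(\sigma_{(0,0)},\sigma_{(0,1)})$ differs from the $\mu^\pm_\beta$ law by an $O(e^{-ck})$ amount of \emph{uncontrolled sign} (the observable $\sigma_{(0,0)}\sigma_{(0,1)}$ is not monotone, so FKG gives you nothing here). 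Summing over $k$ yields a correction of order $n^{-1/2}$ --- the same order as your claimed main term $\mu^\omega_{\Lambda_n;\beta}(e\in\Gamma)$ --- so the ``excess'' is not ``precisely $\mu^\omega_{\Lambda_n;\beta}(e\in\Gamma)$'' and could a priori be cancelled. Second, even granting the decomposition, the lower bound $\mu^\omega_{\Lambda_n;\beta}(e\in\Gamma)\geq c\,n^{-1/2}$ is a local limit theorem at the lattice scale (the interface must not merely come within $O(1)$ of the origin, it must use that particular dual edge), which does not follow from the invariance principle of Greenberg--Ioffe and is not supplied by any estimate quoted in this paper; you correctly flag it as the main obstacle, but it remains unproved.

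The paper's proof is designed to avoid both difficulties. It also starts from the Dobrushin boundary condition and the trick of killing the $\alpha$-optimization, but with the \emph{odd} observables $f_i=\omega_{(0,i)}-\omega_{(0,i-1)}$, whose $\mu^\pm_\beta$-expectations vanish by translation invariance. Summing over $|i|\leq C\sqrt{n}$ telescopes to $F=\omega_{(0,\lfloor C\sqrt n\rfloor)}-\omega_{(0,-\lfloor C\sqrt n\rfloor)}$, and $\bk{F}^{\pm}_{\Lambda_n;\beta}\geq c>0$ follows from FKG plus only an \emph{upper} bound on the interface fluctuations (the interface exits the vertical window $|j|\leq C\sqrt n$ with small probability, by the sharp triangle inequality and Ornstein--Zernike prefactors). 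A pigeonhole over the $2\lfloor C\sqrt n\rfloor$ summands then produces some $j_0$ with $\bk{f_{j_0}}^{\pm}_{\Lambda_n;\beta}\geq c\,n^{-1/2}$, and a final averaging via the DLR equation over the boundary condition induced on a box centered at $(0,j_0)$ yields, non-constructively, a boundary condition $\omega$ for the fixed local function $f(\omega)=\omega_{(0,1)}-\omega_{(0,0)}$. In short: the paper proves an averaged version of the local statement you are after and extracts the pointwise bound by pigeonhole, whereas your route requires a genuine local CLT for the interface plus a cancellation control that is not available; the paper itself only \emph{conjectures} (in the remark following the proof) that the bound holds for the Dobrushin boundary condition directly, which is essentially what your argument would establish if completed.
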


\section{Remarks and open problems}
In this section, we make some comments about Theorem~\ref{thm_main} and list some natural related problems.

\paragraph{\bf General boxes}
Our first comment is that the choice of a square box $\Lambda_n$ in Theorem~\ref{thm_main} does not restrict its generality. Indeed, similarly to what is done in the proof of Corollary~\ref{cor_AH}, given $\Lambda\subset\bbZ^2$, we can consider the largest box $\Lambda_n\subset\Lambda$ and use the Markov property to deduce that the claim of Theorem~\ref{thm_main} remains true for $\Lambda$ (with this value of $n$). This shows that a small region deep inside a box of arbitrary shape, with arbitrary boundary condition, will fall either deeply inside a region of $+$ phase or of $-$ phase, with high probability.

\paragraph{\bf ``Generic'' boundary condition}
As discussed above, the estimate we have on the error term in Theorem~\ref{thm_main} is essentially optimal.
However, it seems very likely that a ``generic'' boundary condition should yield, with high probability, configurations with no crossing interfaces, which should improve the error term to $e^{-O(n)}$. One of the difficulties is to give a precise meaning to the word ``generic'' in this context. One possible choice would be to sample the boundary condition according to some natural probability measure. Unfortunately, very little is known about the Ising model with a strongly inhomogeneous boundary condition. The only work we are aware of that is related to this question is~\cite{vEnNetSch05}, in which the following result is proved: Let the spins of $\omega$ be independent Bernoulli random variables with parameter $1/2$. Then, for almost all $\omega$, the probability of appearance of an interface goes to zero as the system size goes to infinity, provided that $\beta$ be large enough. This shows that, for a generic boundary condition, typical configurations of the low-temperature Ising model do not possess macroscopic interfaces.

A related issue, whose solution would probably be helpful in making progress in the previously mentioned problem, is that of wetting above an inhomogeneous substrate. Consider a 2d n.n.f.\ Ising model at inverse temperature $\beta>\betac$, in a box $\Lambda_n$ with $+$ boundary condition along the vertical and top sides of the box, and $-$ boundary condition along the bottom side. If the interaction $\sigma_i\sigma_j$ between the spins in the bottom row of $\Lambda_n$ and those outside the box is modified to $h\sigma_i\sigma_j$, with $h>0$, then an interface is present along the bottom wall. As long as $h<h_{\rm\scriptscriptstyle w}(\beta)$, for some explicitly known value $0 < h_{\rm\scriptscriptstyle w}(\beta) < 1$, the interface sticks to the bottom wall, its Hausdorff distance to the wall being $O(\log n)$; this is the so-called \textsf{partial wetting} regime. When $h\geq h_{\rm\scriptscriptstyle w}(\beta)$, the interface is repelled away from the bottom wall, and the Hausdorff distance becomes $O(\sqrt{n})$; this is the \textsf{complete wetting} regime. The transition between these two regimes is called the \textsf{wetting transition}. All this is rather well understood, see~\cite{PfVe96} for a review. Understanding the corresponding problem when the homogeneous boundary field $h$ is replaced by site-dependent boundary fields $h_i$ is much more difficult and still mostly open~\cite{DunTop00}.

A final open problem that might be of interest is to understand how robust the Dobrushin boundary conditions are: Start with such a boundary condition, and randomly flip a density $\rho>0$ of spins; does the macroscopic interface survive? What can be said about the critical $\rho$ at which the macroscopic interface disappears?

\section{Proof of the main result}\label{sec_ProofMain}
We shall need several technical results about the 2d n.n.f.\ Ising model. These can be found in Appendix~\ref{appendix}, as well as all relevant definitions for the proofs we present below. We urge the reader not familiar with duality or the random-line representation to read this appendix first.

\medskip
The proof of Theorem~\ref{thm_main} comprises two main steps: (i) Proving that, with high probability, at most one interface approaches the center of the box $\Lambda_n$, (ii) proving that this interface, when present, undergoes unbounded fluctuations (actually of order $\sqrt{n}$). It will then follow that any local observable, with support close to the center of the box, will lie, with high probability, deep inside the $+$ or $-$ phase.

\subsection{Typical configurations have at most one interface near the center of the box}
As explained in Appendix~\ref{appendix}, we associate to the boundary condition $\omega$ the set $\ubo\equiv\{b_1,\ldots,b_{2M}\}$ of endpoints of the open contours induced by $\omega$. We also denote by $\uG(\sigma) \equiv \{\Gamma_1(\sigma),\ldots,\Gamma_{M}(\sigma)\}$ the set of the latter open contours in a configuration $\sigma$ compatible with the boundary condition $\omega$ (their ordering is chosen according to some fixed, but arbitrary, rule). $\uG$ induces a matching of the elements of $\ubo$. Of course, not all possible matchings of $\ubo$ can be realized in this way, and we denote by $\Pi(\omega)$ the set of all admissible matchings; a particular admissible matching, realized in a configuration $\sigma$, is denoted by $\pi(\sigma)$. The notation $(b,b')\in\pi(\sigma)$ means that $b$ and $b'$ are matched in $\pi(\sigma)$. The open contour with endpoints $b$ and $b'$ is denoted by $\Gamma_{b,b'}$.

Let $\max\lbrace 2\xi,\tfrac34\rbrace < a < 1$, and set $\bar\Lambda_{2n^a}\defby [-2n^a,2n^a]^2 \subset \bbR^2$. The next lemma shows that, with high probability, a pair $(b,b')$ in an admissible matching, whose associated open contour intersects the box $\Lambda_{n^a}^\star$, must be such that the segment $\overline{bb'}$ intersects $\bar\Lambda_{2n^a}$.

\begin{lemma}\label{lem_crossing}
Let $\max\lbrace2\xi,\tfrac34\rbrace < a < 1$.
There exists $\Cl{c7}(\beta)>0$ such that, for all $n$ large enough,
\[
\GM\bigl( \exists (b,b')\in\pi(\sigma) \,:\, \Gamma_{b,b'} \cap\Lambda_{n^a}^\star \neq \varnothing,\, \overline{bb'} \cap \bar\Lambda_{2n^a} = \varnothing \bigr) \leq e^{-\Cr{c7} n^{2a-1}}.
\]
\end{lemma}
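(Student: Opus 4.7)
The plan is to reduce the event to a union bound over pairs of endpoints in $\ubo$ and, for each fixed pair, to use the random-line representation together with the surface-tension/Ornstein-Zernike estimates gathered in Appendix~\ref{appendix} to bound the probability that the corresponding open contour makes a detour large enough to reach the central box $\Lambda_{n^a}^\star$. Since every open-contour endpoint lies on $\partial\Lambda_n$, we have $|\ubo|=2M=O(n)$ and hence at most $O(n^2)$ candidate pairs. Writing $\calB(b,b')$ for the event $\{(b,b')\in\pi(\sigma),\ \overline{bb'}\cap\bar\Lambda_{2n^a}=\varnothing,\ \Gamma_{b,b'}\cap\Lambda_{n^a}^\star\neq\varnothing\}$, a union bound reduces the task to showing $\GM(\calB(b,b'))\leq e^{-c\,n^{2a-1}}$ for a single pair, with a bit of room to absorb the $n^2$ prefactor (which is why the hypothesis $a>3/4$, giving $2a-1>1/2$, is comfortable).

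The geometric heart of the argument will be an elementary ``ellipse'' estimate. If $\overline{bb'}$ stays at Euclidean distance at least $2n^a$ from the origin and $L\defby|b-b'|\leq C' n$, then any nearest-neighbour path $\gamma$ from $b$ to $b'$ visiting a point $p$ with $\|p\|_\infty\leq n^a$ satisfies
\[
|\gamma|\;\geq\;|b-p|+|p-b'|\;\geq\; L+\frac{2\Delta^2}{L},
\]
where $\Delta\geq(1-o(1))n^a$ denotes the Euclidean distance from $p$ to $\overline{bb'}$. Thus on $\calB(b,b')$ the open contour $\Gamma_{b,b'}$ is forced to carry an excess length (relative to the straight segment) of at least $c\,n^{2a-1}$. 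On the probabilistic side, I would invoke the random-line representation to upper bound the weight of a fixed open contour essentially by $e^{-\tau_\beta(\gamma)}$ (with $\tau_\beta$ the surface tension), and the matching Ornstein-Zernike lower bound on the partition function of open contours between $b$ and $b'$, namely $\exp(-\tau_\beta(b-b')-o(L))$. Summing the resulting ratio over nearest-neighbour paths of excess length at least $c\,n^{2a-1}$---whose lattice entropy contributes only a polynomial factor in $n$, since $\beta>\betac$ places the dual model in the high-temperature regime---yields $\GM(\calB(b,b'))\leq e^{-c'\,n^{2a-1}}$. Heuristically, this is the diffusive-deviation estimate: an open contour of length $O(n)$ fluctuates on scale $O(\sqrt{n})$, so forcing a deviation of order $n^a\gg\sqrt{n}$ costs $\exp(-c\,n^{2a}/n)$.

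The hard part will be to implement the contour bound above cleanly inside the random-line representation, where $\Gamma_{b,b'}$ coexists with many other open contours and closed clusters induced by $\omega$. The tools compiled in the appendix---notably monotonicity/FKG and cluster-expansion estimates for the surface tension---should allow one to condition on $(b,b')$ being matched and to decouple $\Gamma_{b,b'}$ from the rest at the cost of only subexponential corrections; but care is required to ensure that these corrections, together with the $O(n^2)$ union-bound factor, do not overwhelm $e^{-c\,n^{2a-1}}$. The hypothesis $a>3/4$ is precisely what supplies the necessary margin.
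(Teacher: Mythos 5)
Your proposal correctly isolates the geometric mechanism --- the excess of $\tau_\beta(z-b)+\tau_\beta(z-b')$ over $\tau_\beta(b-b')$ of order $n^{2a-1}$ when $z$ lies at distance $n^a$ from a chord of length $O(n)$, which is exactly how the paper concludes via the sharp triangle inequality~\eqref{eq_App_STI} --- and the union bound over the $O(n^2)$ pairs $(b,b')$ is also as in the paper. But the step you yourself flag as ``the hard part'', namely decoupling $\Gamma_{b,b'}$ from the other open contours forced by the arbitrary boundary condition $\omega$, is precisely where the substance of the proof lies, and your proposal does not supply it. The paper's solution is concrete: it conditions on the family $\uG^1$ of open contours with both endpoints on $\partial\Lambda_n^1$ (the component of $\Lambda_n\setminus\overline{bb'}$ containing $\bar\Lambda_{2n^a}$), observes that on the event $\Gamma_{b,b'}\cap\Lambda_{n^a}^\star\neq\varnothing$ there must be an s-path of $-$ spins joining $\partial\Lambda_n^2\cap\partial\Lambda_n$ to a line $\calL$ parallel to $\overline{bb'}$ at distance $n^a$, and --- this event being decreasing --- uses the FKG inequality to pass to the pure two-valued boundary condition $\pm(b,b')$ on the full box, where only the single contour $\Gamma_{b,b'}$ remains and the partition-function ratio is controlled by Lemma~\ref{lem_OZ-rough}. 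Without such a reduction you cannot legitimately invoke the lower bound $e^{-\tau_\beta(b-b')-o(|b-b'|)}$ for the relevant normalization, since conditioning on ``$(b,b')\in\pi(\sigma)$'' entangles $\Gamma_{b,b'}$ with all the other contours and with the ratio $\bfZ^{\omega}_{\Lambda_n;\beta}/\bfZ^+_{\Lambda_n;\beta}$ rather than $\bfZ^{\pm(b,b')}_{\Lambda_n;\beta}/\bfZ^+_{\Lambda_n;\beta}$.

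A second, more local, error: your plan to sum ``over nearest-neighbour paths of excess length at least $c\,n^{2a-1}$'' with the claim that the lattice entropy contributes only a polynomial factor is wrong as stated --- the number of such paths grows exponentially in their length. The entropy must be absorbed into the contour weights \emph{before} summing; this is what~\eqref{eq_App_BK_bis} accomplishes, giving $\sum_{\Gamma:b\to z\to b'}q_{\Lambda;\beta}(\Gamma)\leq e^{-\tau_\beta(z-b)-\tau_\beta(z-b')}$, after which one sums only over the $O(n)$ intermediate points $z$ on the separating line $\calL$. This part is fixable, but as written the step would fail.
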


\begin{proof}
\begin{figure}[t]
 \centering
 \scalebox{.5}{\input{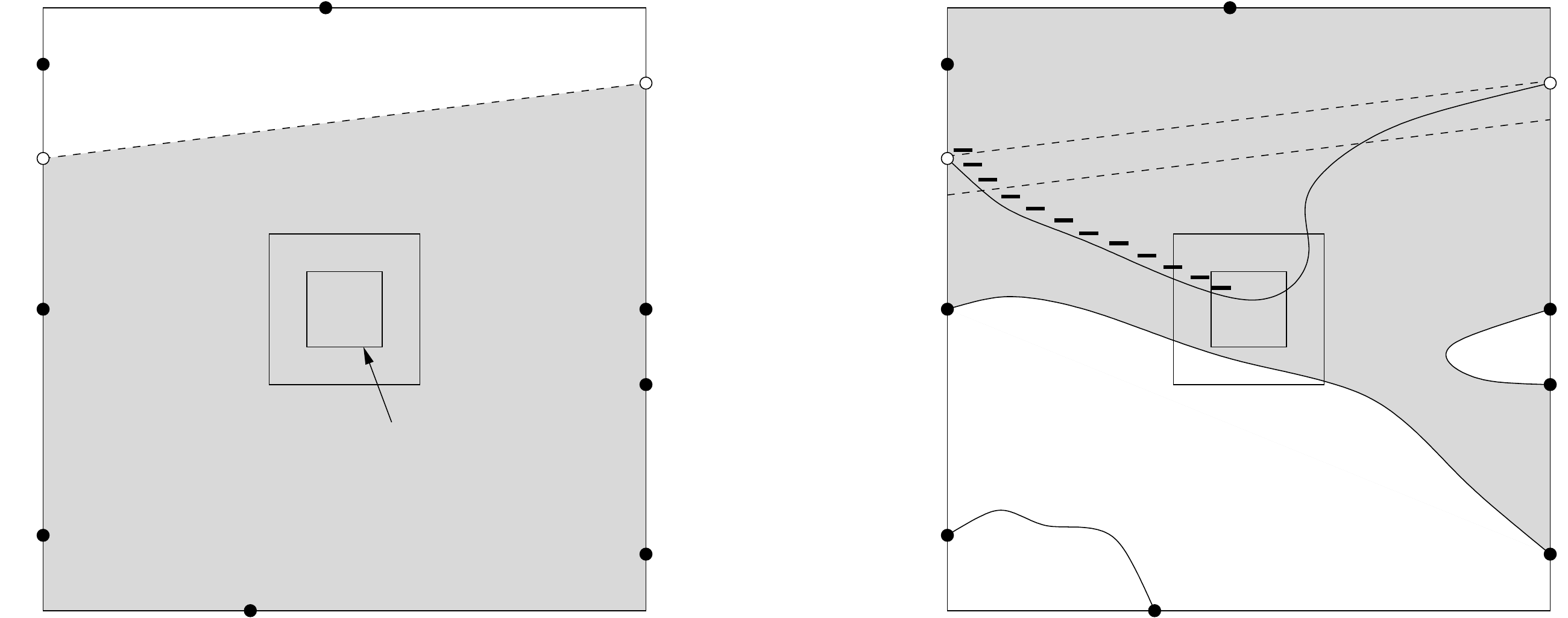_t}}
 \caption{The procedure in Lemma~\ref{lem_crossing}. The dots on the boundary represent $\ubo$, the white ones standing for $b,b'$. Left: The shaded area is the sub-box $\Lambda_n^1$. Right: The shaded area is the box $\Lambda_n(\uG^1)$; observe that when $\Gamma_{b,b'}$ intersects $\Lambda_{n^a}^\star$, there must be an s-path of $-$ spins starting from $\partial\Lambda^2_n\cap\partial\Lambda_n$ and crossing $\bar\calL$ (assuming that the b.c. on $\partial\Lambda_n(\uG^1)\setminus\partial\Lambda_n$ is $+$).}
 \label{fig:crossing}
\end{figure}%
Let $(b,b')\in\ubo$, such that $\overline{bb'} \cap \bar\Lambda_{2n^a} = \varnothing$.
The line segment $\overline{bb'}$ splits $\Lambda_n$ into two disjoint components $\Lambda_n^1$ and $\Lambda_n^2$ (with a fixed rule for attributing the vertices falling on the segment to one of these two sets), with $\bar\Lambda_{2n^a}\subset\Lambda_n^1$ (see Fig.~\ref{fig:crossing}). We denote by $\ub^1(\omega)$ the subset of $\ubo\setminus\{b,b'\}$ consisting of vertices lying on $\partial\Lambda_n^1$.

Let $\calC_{b,b'}$ be the set of configurations of all open contours $\uG^1(\sigma)$ with (both) endpoints in $\ub^1(\omega)$ appearing in configurations $\sigma$ for which $\Gamma_{b,b'}\cap\Lambda_{n^a}^\star\neq\varnothing$.

Such a family $\uG^1(\sigma)$ partitions $\Lambda_n$ into a number of connected components, only one of which contains $b$ and $b'$ along its boundary; we denote the latter component by $\Lambda(\uG^1(\sigma))$, and the corresponding boundary condition by $\omega(\uG^1(\sigma))$ (see Fig.~\ref{fig:crossing}); we assume, without loss of generality, that the boundary condition along $\partial\Lambda(\uG^1(\sigma))\setminus\partial\Lambda_n$ is given by $+$ spins. Using these notations and the DLR equation~\eqref{DLR}, we can write
\[
\GM\bigl(\Gamma_{b,b'} \cap\Lambda_{n^a}^\star \neq \varnothing \bigr) = \sum_{\uG^1 \in \calC_{b,b'}} \GM(\uG^1(\sigma) = \uG^1)\, \mu_{\Lambda(\uG^1);\beta}^{\omega(\uG^1)}\bigl(\Gamma_{b,b'} \cap\Lambda_{n^a}^\star \neq \varnothing \bigr).
\]
Denote by $\bar\calL$ the line parallel to $\overline{bb'}$ at distance $n^a$ from the latter, and located on the same side as $\bar\Lambda_{2n^a}$, and $\calL$ a discrete approximation in $(\bbZ^{2})^\star$ (say, the nearest neighbor path staying closest to $\bar\calL$ in Hausdorff distance, with a fixed rule to break possible ties). On the event $\Gamma_{b,b'} \cap \Lambda_{n^a}^\star\neq\varnothing$, there must be a s-path (see the Appendix) of $-$ spins connecting $\partial\Lambda^2_n\cap\partial\Lambda_n$ to $\calL$, an event we denote by $\partial\Lambda^2_n\cap\partial\Lambda_n \minusconn \calL$. The latter event being decreasing, it follows from the FKG inequality that
\begin{align}\label{fkg}
\mu_{\Lambda(\uG^1);\beta}^{\omega(\uG^1)}\bigl(\Gamma_{b,b'} \cap\Lambda_{n^a}^\star \neq \varnothing \bigr)
&\leq
\mu_{\Lambda(\uG^1);\beta}^{\omega(\uG^1)}\bigl( \partial\Lambda^2_n\cap\partial\Lambda_n \minusconn \calL \bigr) \nonumber \\
&\leq
\mu_{\Lambda_n;\beta}^{\pm(b,b')} \bigl( \partial\Lambda^2_n\cap\partial\Lambda_n \minusconn \calL \bigr) \nonumber \\
&\leq
\mu_{\Lambda_n;\beta}^{\pm(b,b')} \bigl( \Gamma_{b,b'} \cap \calL \neq\varnothing \bigr),
\end{align}
where the boundary condition $\pm(b,b')$ is given by $+1$ along $\partial\Lambda_n^1$ and $-1$ along $\partial\Lambda_n^2$. The last identity follows from the fact that the contour $\Gamma_{b,b'}$ cannot cross an s-path of $+$ spins.

To evaluate the probability in the right-hand side of~\eqref{fkg}, first observe that
\begin{equation}\label{upperbound}
\mu_{\Lambda_n;\beta}^{\pm(b,b')} \bigl( \Gamma_{b,b'} \cap \calL \neq\varnothing \bigr)
\leq
\frac{ \bfZ_{\Lambda_n;\beta}^{+} }{ \bfZ_{\Lambda_n;\beta}^{\pm(b,b')} }\, \sum_{z\in\calL\cap\Lambda_n^\star} \sum_{\Gamma : b\to z \to b'} q_{\Lambda_n;\beta} (\Gamma) .
\end{equation}
On the one hand, applying Lemma~\ref{lem_OZ-rough} with $\rho\in (1/2,2a-1)$, we obtain, for some constant $\Cl{c111}(\beta)$ that
\[
\frac{ \bfZ_{\Lambda_n;\beta}^{\pm(b,b')} }{ \bfZ_{\Lambda_n;\beta}^{+} } \geq e^{-\Cr{c111} n^\rho} e^{-\tau_\beta(b-b')}.
\]
On the other hand, it follows from~\eqref{eq_App_BK_bis} that
\[
\sum_{\Gamma : b\to z \to b'} q_{\Lambda_n;\beta} (\Gamma) \leq e^{-\tau_\beta(z-b) -\tau_\beta(z-b')}.
\]
However, Inequality~\eqref{eq_App_STI} implies that, uniformly in $z\in\calL\cap\Lambda_n^\star$ and in $b,b'$ such that $\overline{bb'}\cap\bar\Lambda_{2n^a}=\varnothing$,
\[
\tau_\beta(z-b)+\tau_\beta(z-b')-\tau_\beta(b'-b) \geq \kappa_\beta \bigl( \normII{z-b} + \normII{z-b'} - \normII{b'-b} \bigr) \geq \Cl{ccc5}(\beta) n^{2a-1}.
\]
Indeed, the triangle $bzb'$ has a base $bb'$ of length less than $3n$ and height at least $n^{a}$.
Since there are at most $4n$ vertices $z\in\calL$, we thus conclude that, for $n$ large enough,
\[
\mu_{\Lambda_n;\beta}^{\pm(b,b')} \bigl( \Gamma_{b,b'} \cap \calL \neq\varnothing \bigr) \leq e^{-\Cl{c3}n^{2a-1}},
\]
for some constant $\Cr{c3}(\beta)>0$. We thus obtain from~\eqref{fkg} that, for all $n$ large enough,
\[
\GM\bigl(\Gamma_{b,b'} \cap\Lambda_{n^a}^\star \neq \varnothing \bigr) \leq e^{-\Cr{c3} n^{2a-1}},
\]
and the conclusion follows, since there are at most $64 n^2$ pairs $b,b'$.
\end{proof}
\begin{lemma}\label{lem_atMostOne}
Let us denote by $N_{\rm\scriptscriptstyle cr}$ the number of open contours intersecting $\Lambda_{n^a}^\star$ (which we call \textsf{crossing contours}). There exists $\Cl{c-atMostOne}(\beta)>0$ such that, for all $n$ large enough,
\[
\GM\bigl( N_{\rm\scriptscriptstyle cr} \geq 2 \bigr) \leq e^{-\Cr{c-atMostOne} n^{2a-1}}.
\]
\end{lemma}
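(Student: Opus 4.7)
The plan is to bootstrap Lemma~\ref{lem_crossing} with a contour switching (rewiring) argument. Let $\calE$ denote the event that every crossing contour $\Gamma_{b,b'}$ satisfies $\overline{bb'}\cap \bar\Lambda_{2n^a}\neq\varnothing$. Lemma~\ref{lem_crossing} together with a union bound over the $O(n^2)$ pairs of endpoints yields $\GM(\calE^c)\leq e^{-c(\beta)\, n^{2a-1}}$, so it remains to bound $\GM(\calE\cap\{N_{\rm\scriptscriptstyle cr}\geq 2\})$.

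On $\calE\cap\{N_{\rm\scriptscriptstyle cr}\geq 2\}$ I would select two crossing contours $\Gamma_{b_1,b_1'}$ and $\Gamma_{b_2,b_2'}$ whose straight-line segments $\overline{b_ib_i'}$ both intersect the small central box $\bar\Lambda_{2n^a}$. With the four endpoints fixed on $\partial\Lambda_n$, I consider the two alternative matchings $\pi_2=\{(b_1,b_2),(b_1',b_2')\}$ and $\pi_3=\{(b_1,b_2'),(b_1',b_2)\}$. A short planar argument shows that one of them, denoted $\pi_\star=\{(b_1,c),(b_1',c')\}$, produces two segments both disjoint from $\bar\Lambda_{2n^a}$; applying the strict triangle inequality~\eqref{eq_App_STI} to the two thin triangles formed by the crossing of the original chords inside $\bar\Lambda_{2n^a}$ (each with a base of length $O(n)$ on $\partial\Lambda_n$ and transverse extent $\geq n^a$) one obtains the surface-tension gap
\[
\tau_\beta(b_1-b_1')+\tau_\beta(b_2-b_2')-\tau_\beta(b_1-c)-\tau_\beta(b_1'-c')\ \geq\ c(\beta)\,n^{2a-1}.
\]

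From here the argument follows the scheme of Lemma~\ref{lem_crossing}. Conditioning via the DLR equation~\eqref{DLR} on the configuration of all open contours other than $\Gamma_{b_1,b_1'}$ and $\Gamma_{b_2,b_2'}$ reduces the estimate to a two-interface problem in the connected sub-box containing the four endpoints, with the boundary condition inherited from the conditioning realising the original matching. The joint weight of the two interfaces is upper-bounded via~\eqref{eq_App_BK_bis} by $e^{-\tau_\beta(b_1-b_1')-\tau_\beta(b_2-b_2')}$, while Lemma~\ref{lem_OZ-rough} applied with $\rho\in(1/2,2a-1)$ provides a lower bound on the denominator partition function by reverting to the alternative matching $\pi_\star$, of the form $e^{-Cn^\rho}\,e^{-\tau_\beta(b_1-c)-\tau_\beta(b_1'-c')}$. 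Combining these with the surface-tension gap yields, for each fixed quadruple of endpoints, a bound of the form $e^{-c(\beta)n^{2a-1}+Cn^\rho}$; a union bound over the $O(n^4)$ quadruples absorbs the polynomial factor (since $n^{2a-1}$ dominates $\log n$ and $\rho<2a-1$), and the lemma follows.

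The main obstacle is the geometric/rewiring step: verifying that at least one of the two rewired matchings genuinely avoids $\bar\Lambda_{2n^a}$ and carries a surface-tension saving of order $n^{2a-1}$. This requires a short case analysis according to the cyclic order of $b_1,b_1',b_2,b_2'$ on $\partial\Lambda_n$ (the two chords may be in a \emph{crossing}, \emph{nested} or \emph{near-parallel} configuration), combined with the same base-$n$/height-$n^a$ use of~\eqref{eq_App_STI} already exploited in Lemma~\ref{lem_crossing}. Once this geometric gap is secured, the Ornstein--Zernike/partition-function machinery transposes essentially verbatim.
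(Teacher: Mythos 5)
Your overall scheme coincides with the paper's: reduce to the event $\calD$ that every crossing contour has its chord meeting $\bar\Lambda_{2n^a}$ via Lemma~\ref{lem_crossing}, condition on the remaining open contours through the DLR equation, bound the joint weight of two crossing contours by $e^{-\tau_\beta(b_1'-b_1)-\tau_\beta(b_2'-b_2)}=e^{-O(n)}$ using \eqref{eq_App_marginal}, \eqref{eq_App_BK} and \eqref{eq_App_upperBd}, lower-bound the partition function of the conditioned domain by the surface-tension cost of the ``rewired'' matching, and take a union bound over the $O(n^4)$ quadruples of endpoints. However, the step you yourself flag as the main obstacle is precisely where your sketch goes astray, and the route you propose for closing it would not work as described. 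Two distinct open contours are disjoint curves with endpoints on $\partial^\star\Lambda_n$, so their endpoint pairs cannot interleave in the cyclic order along the boundary; hence the chords $\overline{b_1b_1'}$ and $\overline{b_2b_2'}$ \emph{never} cross, and the ``two thin triangles formed by the crossing of the original chords inside $\bar\Lambda_{2n^a}$'' configuration is impossible. The correct observation --- and the one the paper uses --- is that two \emph{non-crossing} chords both meeting $\bar\Lambda_{2n^a}$ are forced to be nearly parallel, so that (taking the two contours to be neighbouring crossing contours) $\max\lbrace\normI{b_1-b_2},\normI{b_1'-b_2'}\rbrace\leq Cn^{a}$. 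The matching $\{(b_1,b_2),(b_1',b_2')\}$ therefore costs only $O(n^a)$ in surface tension, and the gap against $\tau_\beta(b_1'-b_1)+\tau_\beta(b_2'-b_2)\geq cn$ is of order $n$, not $n^{2a-1}$: no case analysis and no use of the sharp triangle inequality \eqref{eq_App_STI} are needed at this stage.

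Two further repairs are needed to make your reduction precise. First, Lemma~\ref{lem_OZ-rough} concerns a single Dobrushin pair in the full square box and does not apply to the conditioned domain with four marked endpoints; the paper instead uses the elementary bound $\bfZ^{\omega(\uG^1,\uG^2)}_{\Lambda_n(\uG^1,\uG^2);\beta}\geq e^{-Cn^{a}}\,\bfZ^{+}_{\Lambda_n(\uG^1,\uG^2);\beta}$, which is exactly the $O(n^a)$ cost of the short matching (via \eqref{eq_App_weight} and GKS), and which suffices because the numerator is $e^{-cn}$. Second, one should choose the two contours to be \emph{neighbouring} crossing contours and condition on the contours lying entirely in the two outer pieces $\Lambda_n^1,\Lambda_n^2$ determined by the two chords, rather than on ``all open contours other than the chosen two''; otherwise the component of the conditioned domain containing $b_1,b_1',b_2,b_2'$ need not carry a boundary condition whose only long-range mismatch is the two pairs above. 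With these corrections your argument closes and in fact gives $\GM(N_{\rm\scriptscriptstyle cr}\geq 2,\calD)\leq e^{-cn}$, the $e^{-cn^{2a-1}}$ in the statement coming solely from $\GM(\calD^{\rm c})$.
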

\begin{proof}
\begin{figure}[t]
 \centering
 \scalebox{.5}{\input{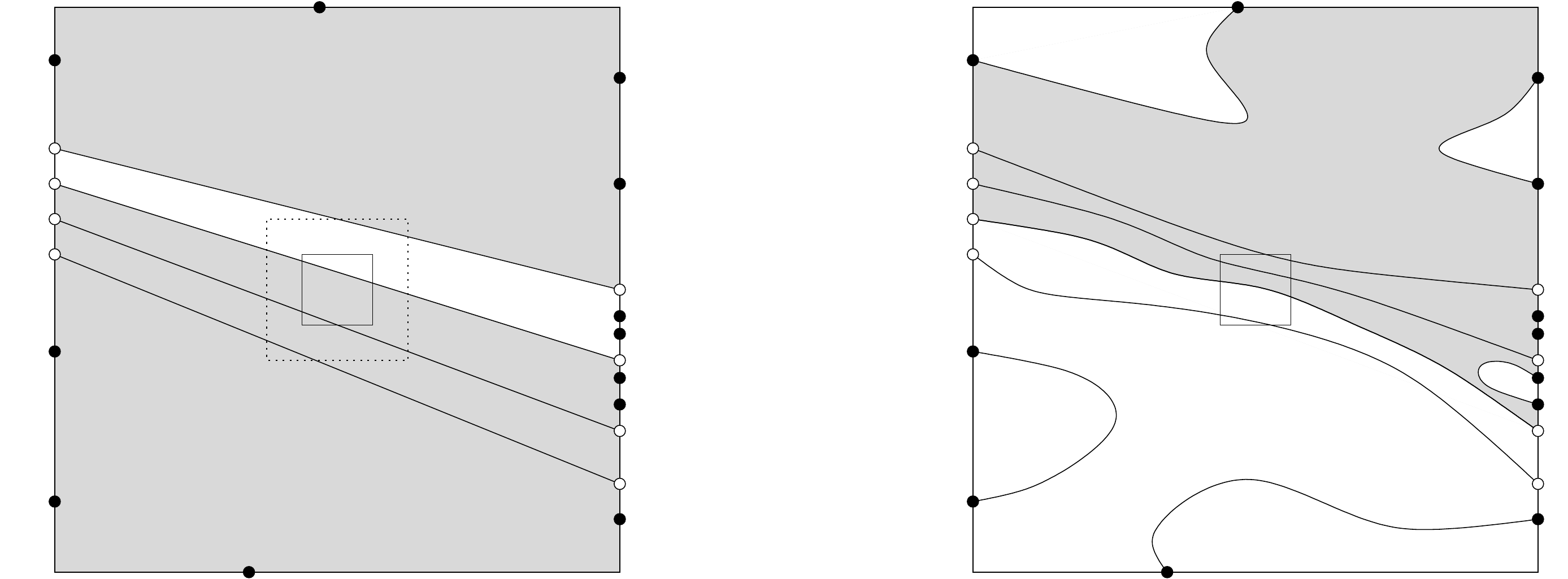_t}}
 \caption{Illustration of the procedure in the proof of Lemma~\ref{lem_atMostOne}.}
 \label{fig:interfaces}
\end{figure}%
Thanks to Lemma~\ref{lem_crossing}, we can assume that all crossing contours have endpoints $b,b'$ satisfying $\overline{bb'}\cap \bar\Lambda_{2n^a}\neq\varnothing$; let us denote by $\calD$ this event.

Let $\Gamma_{b_1,b'_1}(\sigma), \ldots, \Gamma_{b_m,b'_m}(\sigma)$ be the family of all crossing contours in a configuration $\sigma\in\calD$, assuming that $m\geq 2$. Because we suppose that the event $\calD$ is realized, these endpoints can be naturally split into two ``diametrically opposed" families $b_1,\ldots,b_m$ and $b'_1,\ldots,b'_m$. The vertices $b_1,\ldots, b_m$ are ordered clockwise (and thus the corresponding vertices $b'_1,\ldots,b'_m$ counterclockwise). In particular, the crossing contours $\Gamma_{b_1,b'_1}(\sigma)$ and $\Gamma_{b_2,b'_2}(\sigma)$ are neighbors (i.e. there are no other crossing contours between them). Notice that, since $\calD$ is supposed to hold, $\max\lbrace\normI{b_1-b_2},\normI{b'_1-b'_2}\rbrace \leq \Cl{c9} n^{a}$.

The segments $\overline{b_1b'_1}$ and $\overline{b_2b'_2}$ split the box $\Lambda_n$ into 3 pieces. We denote by $\Lambda_n^1$ and $\Lambda_n^2$ the two non-neighboring ones (see Fig.~\ref{fig:interfaces}). Let also $\uG^1$, resp.\ $\uG^2$, be the open contours with both endpoints on $\partial\Lambda_n^1$, resp.\ $\partial\Lambda_n^2$. These open contours partition $\Lambda_n$ into connected pieces, exactly one of which contains $b_1,b_1',b_2,b_2'$ along its boundary; we denote this component by $\Lambda_n(\uG^1,\uG^2)$, and the induced boundary condition on $\Lambda_n(\uG^1,\uG^2)$ by $\omega(\uG^1,\uG^2)$ (see Fig.~\ref{fig:interfaces}). For definiteness and without loss of generality, we can assume that the boundary condition acting along $\partial\Lambda_n(\uG^1,\uG^2)\setminus\partial\Lambda_n$ is given by $+$ spins.
Using the DLR equation (\ref{DLR}), we have
\begin{multline*}
\GM\bigl( N_{\rm\scriptscriptstyle cr} \geq 2,\calD \bigr)
\leq
\sum_{b_1,b'_1,b_2,b'_2} \sum_{\uG^1,\uG^2} \GM(\uG^1(\sigma)=\uG^1,\uG^2(\sigma)=\uG^2)\\
\times \mu_{\Lambda_n(\uG^1,\uG^2);\beta}^{\omega(\uG^1,\uG^2)} \bigl( \Gamma_{b_1,b'_1}\text{ and }\Gamma_{b_2,b'_2} \text{ are crossing} \bigr).
\end{multline*}
Let $\{k_1,\ldots,k_\ell\} = \ub(\omega(\uG^1,\uG^2)) \setminus \{b_1,b_2,b'_1,b'_2\}$ be the set of all endpoints of open contours induced by the boundary condition $\omega(\uG^1,\uG^2)$, apart from $b_1,b_2,b'_1,b'_2$. Using~\eqref{eq_App_marginal}, we obtain
\begin{align*}
\mu_{\Lambda_n(\uG^1,\uG^2);\beta}^{\omega(\uG^1,\uG^2)} \bigl( \Gamma_{b_1,b'_1},\Gamma_{b_2,b'_2} \text{ crossing} \bigr)
&\leq
\frac{\bfZ{_{\Lambda_n(\uG^1,\uG^2);\beta}^+}} {\bfZ{_{\Lambda_n(\uG^1,\uG^2);\beta}^{\omega(\uG^1,\uG^2)}}} \sum_{\substack{\Gamma_1:b_1\to b'_1\\\Gamma_2:b_2\to b'_2}} q_{\Lambda_n(\uG^1,\uG^2);\beta}(\Gamma_1,\Gamma_2).
\end{align*}
On the one hand, using~\eqref{eq_App_BK} and~\eqref{eq_App_upperBd}, we deduce the following upper bound
\begin{align*}
\sum_{\substack{\Gamma_1:b_1\to b'_1\\\Gamma_2:b_2\to b'_2}} q_{\Lambda_n(\uG^1,\uG^2);\beta}(\Gamma_1,\Gamma_2)
& \leq
\sum_{\Gamma_1:b_1\to b'_1}q_{\Lambda_n(\uG^1,\uG^2);\beta}(\Gamma_1)
\sum_{\Gamma_2:b_2\to b'_2}q_{\Lambda_n(\uG^1,\uG^2);\beta}(\Gamma_2)  \\ 
&\leq
 e^{-\tau_\beta(b'_1-b_1)-\tau_\beta(b'_2-b_2)} \leq e^{-\Cl{ccc456}(\beta)n}.
\end{align*}
On the other hand, we evidently have the lower bound
\[
\bfZ_{\Lambda_n(\uG^1,\uG^2);\beta}^{\omega(\uG^1,\uG^2)} \geq e^{-\Cl{c10}n^{a}}\, \bfZ_{\Lambda_n(\uG^1,\uG^2);\beta}^+,
\]
for some constant $\Cr{c10}(\beta)<\infty$, since $\max\lbrace\normI{b_1-b_2},\normI{b'_1-b'_2}\rbrace \leq \Cr{c9} n^{a}$. Combining these estimates, we deduce that
\[
\GM\bigl( N_{\rm\scriptscriptstyle cr} \geq 2 ,\calD \bigr) \leq \Cl{ccc6}n^{2+2a} e^{-\Cl{ccc7} n} \leq  e^{-\Cl{c222} n},
\]
for some constant $\Cr{c222}(\beta)>0$ and for all $n$ large enough.
\end{proof}

\subsection{When present, this interface has large fluctuations}
We denote by $\calI_1$ the event that there is a unique crossing contour.
To deal with $\calI_1$, we have to exploit the fact that the interface undergoes fluctuations of order $\sqrt{n}$ and will thus ``miss'', with high probability, a box of sidelength $n^\xi$ with $\xi<1/2$. The next lemma implements this idea.
\begin{lemma}\label{lem_fluctuations}
Denoting by $\Gamma$ the unique crossing contour on the event $\calI_1$, we have
\[
\GM(\Gamma \cap \Lambda_{2n^\xi}^\star \neq \varnothing, \calI_1) \leq \Cl{c-fluct} n^{\xi-a/2},
\]
for some constant $\Cr{c-fluct}(\beta)$ and all $n$ large enough.
\end{lemma}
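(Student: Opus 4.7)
The plan is to localise the problem at the intermediate scale $n^a$ via Lemma~\ref{lem_crossing} and the DLR property, then exploit the diffusive (Brownian-bridge-like) fluctuations of a single interface on that scale: an interface of length of order $n^a$ has transverse density $\sim n^{-a/2}$ at any fixed point, and the target $\Lambda_{2n^\xi}^\star$ has diameter $\sim n^\xi$, so the expected bound $n^{\xi-a/2}$ is the natural local CLT estimate.

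\emph{Setup.} On $\calI_1$ there is a unique crossing contour $\Gamma=\Gamma_{b,b'}$. By Lemma~\ref{lem_crossing} it suffices to handle the event that $\overline{bb'}\cap\bar\Lambda_{2n^a}\neq\varnothing$, losing only $e^{-\Cr{c7}n^{2a-1}}$. As in the proof of that lemma, I condition via~\eqref{DLR} on the configuration $\uG'$ of all other open contours; this reduces the problem to bounding, for each admissible $\uG'$,
\[
\mu^{\pm(b,b')}_{\Lambda(\uG');\beta}\bigl(\Gamma\cap\Lambda_{2n^\xi}^\star\neq\varnothing\bigr),
\]
where $\Lambda(\uG')\supset\bar\Lambda_{2n^a}$ carries a $\pm(b,b')$-type boundary condition and $\Gamma$ is the only open contour. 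On the event above, $\Gamma$ must cross $\partial\Lambda_{n^a}^\star$; let $y_1$ (resp.\ $y_2$) be the last entry (resp.\ first exit) of $\Gamma$ at $\partial\Lambda_{n^a}^\star$ bracketing the visit to $\Lambda_{2n^\xi}^\star$. Summing over $y_1,y_2\in\partial\Lambda_{n^a}^\star$, the random-line representation together with the BK-type splitting~\eqref{eq_App_BK_bis} at $y_1$ and $y_2$ yields
\[
\mu^{\pm(b,b')}_{\Lambda(\uG');\beta}\bigl(\Gamma\cap\Lambda_{2n^\xi}^\star\neq\varnothing\bigr)
\leq
\sum_{y_1,y_2}\frac{\bfZ^{+}_{\Lambda(\uG');\beta}}{\bfZ^{\pm(b,b')}_{\Lambda(\uG');\beta}}\,e^{-\tau_\beta(b-y_1)-\tau_\beta(y_2-b')}\,\calR(y_1,y_2),
\]
where $\calR(y_1,y_2):=\sum_{z\in\Lambda_{2n^\xi}^\star}\sum_{\Gamma:y_1\to z\to y_2,\,\Gamma\subset\Lambda_{n^a}^\star}q(\Gamma)$.

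\emph{Extracting the fluctuation rate.} Lemma~\ref{lem_OZ-rough} at the global scale~$n$ (applied as in~\eqref{upperbound} with $\rho\in(1/2,2a-1)$, which requires $a>3/4$) controls the ratio of global partition functions by $e^{Cn^\rho}e^{-\tau_\beta(b-b')}$, while a second application of Lemma~\ref{lem_OZ-rough} at the intermediate scale $n^a$ gives a matching lower bound $\sum_{\Gamma:y_1\to y_2,\,\Gamma\subset\Lambda_{n^a}^\star}q(\Gamma)\geq e^{-Cn^{a\rho}}e^{-\tau_\beta(y_1-y_2)}$. Combining these with the sharp triangle inequality~\eqref{eq_App_STI}, the exponential factors collapse (up to subexponential corrections absorbed in the prefactor) to the probability, under the Ising measure in $\Lambda_{n^a}$ with $\pm(y_1,y_2)$ boundary condition, that the interface from $y_1$ to $y_2$ visits $\Lambda_{2n^\xi}^\star$. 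The latter is estimated by a local-CLT-type argument for the bridge of length $|y_1-y_2|\lesssim n^a$: the bridge's transverse density at any fixed point of $\Lambda_{n^a}^\star$ is of order $n^{-a/2}$, so the bound over the target $\Lambda_{2n^\xi}^\star$ is $O(n^\xi\cdot n^{-a/2})$, uniformly in $y_1,y_2$. After summing over the $O(1)$ effectively admissible pairs $(y_1,y_2)$ (the sharp-triangle excess between $y_1,y_2$ and the straight segment from $b$ to $b'$ suppresses the rest), this yields the announced bound.

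The main obstacle is the local CLT input at scale $n^a$: extracting the exact $n^{-a/2}$ transverse density requires a sharper form of Ornstein-Zernike estimates than the purely logarithmic-exponential Lemma~\ref{lem_OZ-rough}, namely a polynomial-prefactor estimate for the return/crossing probability of the bridge. This is exactly where the constraint $a\in(\max\{2\xi,\tfrac34\},1)$ is indispensable: $a>2\xi$ ensures the target box is strictly smaller than the fluctuation scale (so the density bound is meaningful), while $a>\tfrac34$ provides an exponent $\rho\in(\tfrac12,2a-1)$ for which the subexponential corrections of Lemma~\ref{lem_OZ-rough} at scale $n^a$ remain negligible compared to the diffusive gain obtained from~\eqref{eq_App_STI}.
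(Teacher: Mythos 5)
Your heuristic (diffusive transverse density $n^{-a/2}$ at scale $n^a$ against a target of diameter $n^{\xi}$) is the right one, but the argument has a genuine gap precisely at the point you flag and then leave unresolved. First, your decomposition conditions only on the \emph{other} open contours $\uG'$, so the object you must control is still the full crossing contour from $b$ to $b'$, normalized by the global ratio $\bfZ^{+}_{\Lambda(\uG')}/\bfZ^{\pm(b,b')}_{\Lambda(\uG')}$. The only tool you invoke for this ratio is Lemma~\ref{lem_OZ-rough}, which carries an error $e^{Cn^{\rho}}$ with $\rho>1/2$. For the dominant pairs $(y_1,y_2)$ lying near the chord $\overline{bb'}$ the sharp-triangle excess is $O(1)$, so nothing compensates this factor: an $e^{Cn^{\rho}}$ loss cannot be ``absorbed in the prefactor'' of a bound that must be $O(n^{\xi-a/2})\to 0$. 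Second, the ``local-CLT estimate for the bridge'' that would produce the $n^{-a/2}$ density is asserted, not proved; you correctly observe that it requires a polynomial-prefactor Ornstein--Zernike input beyond Lemma~\ref{lem_OZ-rough}, but you never supply it, even though the paper's toolbox contains exactly this ingredient (Lemma~\ref{lem_OZ-2ptf}, and the prefactor $\normII{z-d}^{-1/2}\normII{z-d'}^{-1/2}$ in~\eqref{eq_App_BK_bis}, which you quote but then discard, keeping only the exponential).

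The paper's proof avoids both problems by a different conditioning: in addition to the other open contours, it conditions on the two pieces $\gamma,\gamma'$ of the crossing contour $\Gamma$ itself lying \emph{outside} $\Lambda_{n^a}$. This reduces the question to a single open contour joining two points $d,d'\in\partial^\star\Lambda_{n^a}$ in a domain with $\pm(d,d')$ boundary condition, i.e.\ the problem is genuinely localized at scale $n^a$ before any partition-function estimate is made. There, Lemma~\ref{lem_OZ-2ptf} gives the denominator a lower bound with the correct polynomial prefactor $n^{-a/2}$ and \emph{no} subexponential loss (after a case distinction on whether $\overline{dd'}$ passes near the center, the complementary case being exponentially negligible as in Lemma~\ref{lem_crossing}), while~\eqref{eq_App_BK_bis} bounds the numerator with prefactor $O(n^{-a})$ uniformly in $z\in\partial^\star\Lambda_{2n^\xi}$; the exponentials cancel by the plain triangle inequality, and summing over the $O(n^{\xi})$ points $z$ gives $n^{\xi}\cdot n^{-a}\cdot n^{a/2}=n^{\xi-a/2}$. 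To repair your write-up you would need to (i) replace the conditioning on $\uG'$ alone by conditioning that also freezes the exterior parts of $\Gamma$, and (ii) replace Lemma~\ref{lem_OZ-rough} by Lemma~\ref{lem_OZ-2ptf} in the resulting local estimate.
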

\begin{proof}
\begin{figure}[t]
 \centering
 \scalebox{.5}{\input{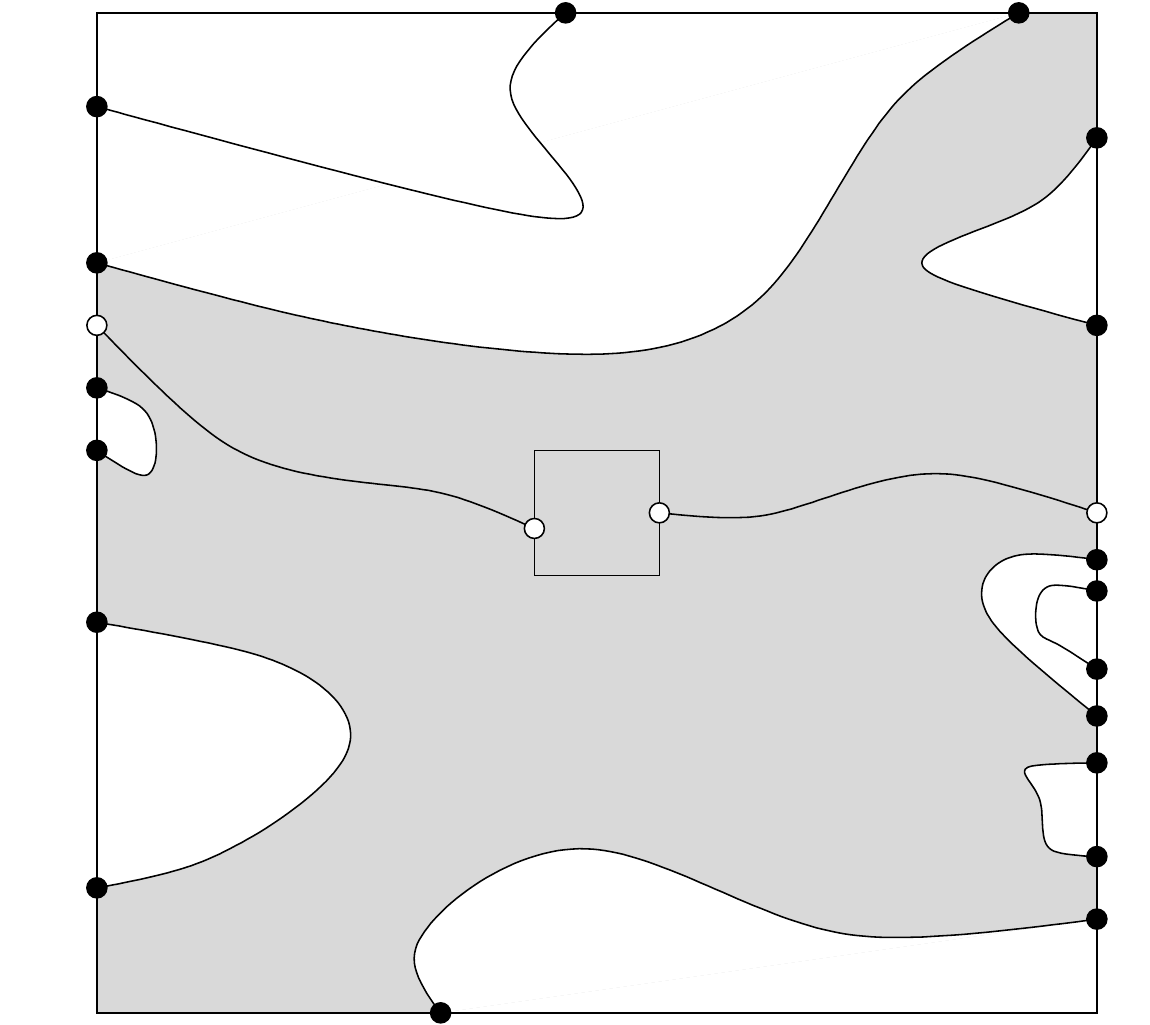_t}}
 \caption{The construction in Lemma~\ref{lem_fluctuations}.}
 \label{fig:fluctuation}
\end{figure}%
Let us denote by $b$ and $b'$ the endpoints of the unique crossing contour $\Gamma$. We denote by $\gamma$ and $\gamma'$ the parts of $\Gamma$ connecting, respectively, $b$ to $\partial^\star\Lambda_{n^a}$ and $b'$ to $\partial^\star\Lambda_{n^a}$ ($\gamma,\gamma'$ are thus two open contours). Let also $\bar\uG$ denote the set of all open contours of the configuration apart from $\Gamma$. The contours $\bar\uG,\gamma,\gamma'$ partition $\Lambda_n$ in a number of connected components, only one of which contains $\Lambda_{n^a}$; we denote the latter by $\Lambda_n(\bar\uG,\gamma,\gamma')$ (see Fig.~\ref{fig:fluctuation}). Let $d,d'$ be the endpoints of $\gamma$ and $\gamma'$ on $\partial^\star\Lambda_{n^a}$.
Observe that the boundary condition acting on $\Lambda_n(\bar\uG,\gamma,\gamma')$ takes two different constant values along each of the two pieces between $d$ and $d'$; we write $\pm(d,d')$ for this boundary condition (by symmetry, it does not matter which part is $+$ and which is $-$).
We consider two cases.\\

\noindent
\emph{Case 1:} $\overline{dd'} \cap \bar\Lambda_{n^a/2} = \varnothing$. In that case, we argue exactly as in the proof of Lemma~\ref{lem_crossing} to obtain that
\[
\mu_{\Lambda_n(\bar\uG,\gamma,\gamma');\beta}^{\pm(d,d')} (\Gamma_{d,d'} \cap \Lambda_{2n^\xi}^\star \neq \varnothing) \leq e^{-\Cl{ccc11}(\beta)n^a}.
\]

\noindent
\emph{Case 2:} $\overline{dd'} \cap \bar\Lambda_{n^a/2} \neq \varnothing$. The argument is completely similar to the one used in the proof of Lemma~\ref{lem_crossing} until expression (\ref{upperbound}). However, the sharp triangle inequality doesn't provide anymore an exponentially small term uniformly over all $\Gamma_{d,d'}$ considered here, since the interface can be straight. We then have to keep track of the prefactors. 
On the one hand, Lemma~\ref{lem_OZ-2ptf} can be applied in order to get
\[
\frac{\bfZ_{\Lambda_n(\bar\uG,\gamma,\gamma');\beta}^{\pm(d,d')}} {\bfZ_{\Lambda_n(\bar\uG,\gamma,\gamma');\beta}^+} \geq \frac{\Cl{ccc8}(\beta)}{n^{a/2}} e^{-\tau_\beta(d'-d)}.
\]
On the other hand, by~\eqref{eq_App_BK_bis}, uniformly in $z\in\partial^\star\Lambda_{2n^\xi}$,
\[
\sum_{\lambda : d\to z \to d'} q_{\Lambda_n(\bar\uG,\gamma,\gamma');\beta} (\lambda) \leq \frac{\Cl{c555}(\beta)}{n^a}\, e^{-\tau_\beta(z-d) -\tau_\beta(z-d')} \leq \frac{\Cr{c555}(\beta)}{n^a}\, e^{-\tau_\beta(d'-d)}.
\]
Summing over $z\in\partial^\star\Lambda_{2n^\xi}$ shows that
\[
\mu_{\Lambda_n(\bar\uG,\gamma,\gamma');\beta}^{\pm(d,d')} (\Gamma_{d,d'} \cap \Lambda_{2n^\xi}^\star \neq \varnothing) \leq \Cl{ccc9}(\beta)\, |\partial^\star\Lambda_{2n^\xi}|\, n^{-a/2} \leq \Cl{ccc10}(\beta)\, n^{\xi-a/2}.
\]
\end{proof}

\subsection{Proof of Theorem \ref{thm_main} and Corollary \ref{cor_AH}.}
\begin{proof}[Proof of Theorem \ref{thm_main}]
\begin{figure}[t]
 \centering
 \scalebox{.5}{\input{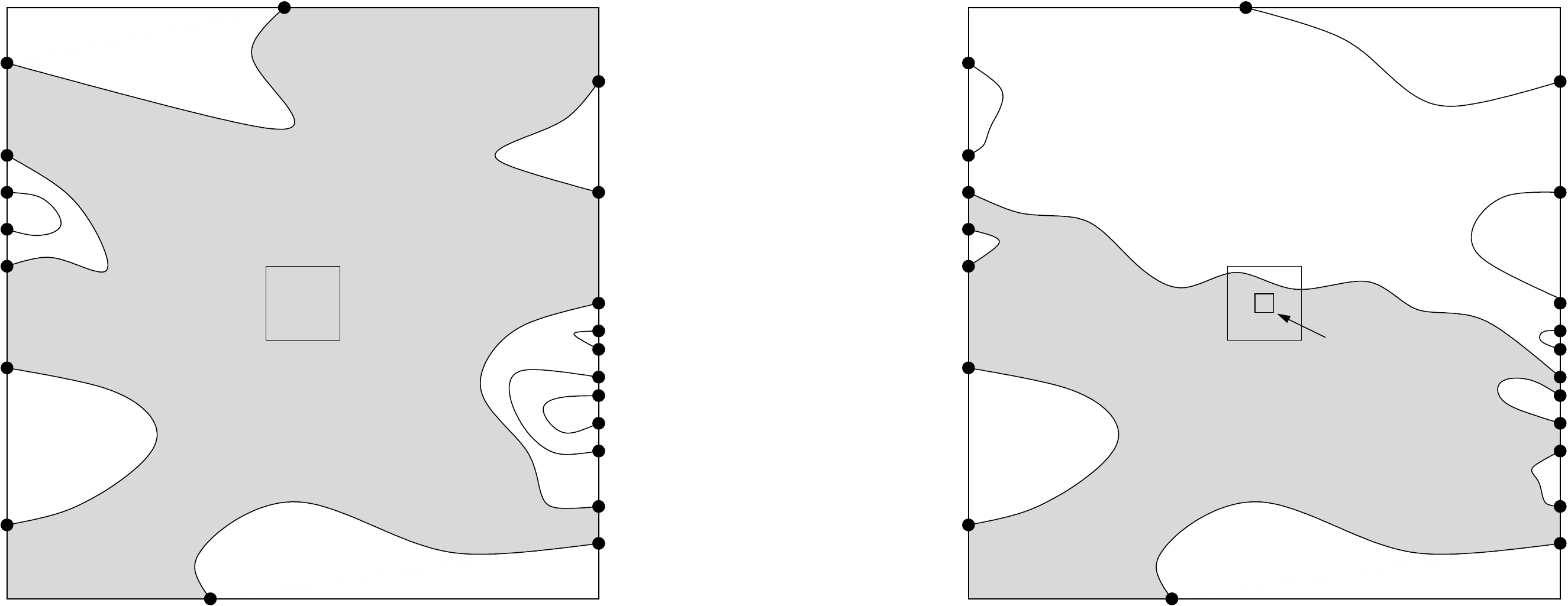_t}}
 \caption{Left: On the event $\calI_0$, there is a region $\Lambda(\gamma)$ (shaded) containing $\Lambda_{n^a}$ with constant spin value on its boundary. Right: On the event $\calI_1$, there is a region $\Lambda(\gamma)$ (shaded) containing $\Lambda_{2n^\xi}$ with constant spin value on its boundary.}
 \label{fig:circuit}
\end{figure}%
Let $\calI_0$ be the event that there is no crossing interface, and, as before, $\calI_1$ the event that there is a unique crossing contour. We know from Lemma~\ref{lem_atMostOne} that, uniformly in $f$,
\begin{equation}\label{eq_I0I1}
 \bk{f}_{\Lambda_n;\beta}^\omega =
\bk{f \,|\, \calI_0}_{\Lambda_n;\beta}^\omega\,\GM(\calI_0) +
\bk{f \,|\, \calI_1}_{\Lambda_n;\beta}^\omega\,\GM(\calI_1) +
O_\beta \bigl(\normsup{f} e^{-\Cr{c-atMostOne}(\beta) n^{2a-1}}\bigr).
\end{equation}
Let us consider first the event $\calI_0$. When the latter occurs, there must be a circuit surrounding $\Lambda_{n^a}$ along which spins take a constant value, see~Fig.~\ref{fig:circuit}. Let us denote by $\calI_0^+(\gamma)$, $\calI_0^-(\gamma)$ the events that the largest such circuit is given by $\gamma$, and the spins value along $\gamma$ is $1$, resp.\ $-1$. Let us also denote by $\Lambda(\gamma)$ the interior of the circuit $\gamma$. It then follows from~\eqref{eq_App_expRelax} that, for some constant $\Cl{cexp}(\beta)>0$, and uniformly in all $\calF_{\Lambda_{n^\xi}}$-measurable functions $f$,
\begin{align}
\bk{f \,|\, \calI_0}_{\Lambda_n;\beta}^\omega
&= \sum_{\gamma} \bigl\{ \GM(\calI_0^+(\gamma) \,|\, \calI_0)\, \bk{f}_{\Lambda(\gamma);\beta}^+ + \GM(\calI_0^-(\gamma) \,|\, \calI_0)\, \bk{f}_{\Lambda(\gamma);\beta}^- \bigr\}\nonumber\\
&= \GM(\calI_0^+\,|\,\calI_0)\, \bk{f}_{\beta}^+ + \GM(\calI_0^-\,|\,\calI_0)\, \bk{f}_{\beta}^- + O_\beta\bigl(\normsup{f} e^{-\Cr{cexp} n^a}\bigr),
\label{eq_I0}
\end{align}
where $\calI_0^\pm \defby \bigcup_\gamma \calI_0^\pm(\gamma)$.\medskip

Now let us consider the event $\calI_1$. It follows from Lemma \ref{lem_fluctuations} that, conditionally on $\calI_1$, there is, with high probability, a contour surrounding $\Lambda_{2n^\xi}$ along which spins take a constant value, see~Fig.~\ref{fig:circuit}. Denoting as before the largest such contour by $\gamma$, its interior by $\Lambda(\gamma)$, and introducing the events $\calI_1^+(\gamma)$ and $\calI_1^-(\gamma)$ similarly as above, we obtain in the same way that, for any $\calF_{\Lambda_{n^\xi}}$-measurable function $f$,
\begin{equation}\label{eq_I1}
\bk{f \,|\, \calI_1}_{\Lambda_n;\beta}^\omega
=
\GM(\calI_1^+\,|\,\calI_1)\, \bk{f}_{\beta}^+ + \GM(\calI_1^-\,|\,\calI_1)\, \bk{f}_{\beta}^- + O_\beta\bigl(\normsup{f} n^{\xi-a/2}\bigr),
\end{equation}
where $\calI_1^\pm \defby \bigcup_\gamma \calI_1^\pm(\gamma)$. \medskip

Let $\calI^\pm \defby \calI_0^\pm\cup\calI_1^\pm$. Observe that $\GM(\calI^+)+\GM(\calI^-) = 1 + O_\beta(n^{\xi-a/2})$.
Inserting~\eqref{eq_I0} and~\eqref{eq_I1} into~\eqref{eq_I0I1}, we obtain finally
\[
\bk{f}_{\Lambda_n;\beta}^\omega
=
\GM(\calI^+)\, \bk{f}_{\beta}^+ + \GM(\calI^-)\, \bk{f}_{\beta}^- + O_\beta\bigl(\normsup{f} n^{\xi-a/2}\bigr),
\]
uniformly in $\calF_{\Lambda_{n^\xi}}$-measurable functions $f$. In particular, we recover the statement of the theorem,
\[
\bk{f}^\omega_{\Lambda_n} = \alpha^{n,\omega}\bk{f}^+ + (1-\alpha^{n,\omega})\bk{f}^- +O_\beta(\normsup{f}\, n^{\xi-a/2}),
\]
by choosing $a=2(b+\delta)$.
\end{proof}

\begin{proof}[Proof of Corollary \ref{cor_AH}]
Let $\mu\in\calG_\beta$ be an infinite-volume Gibbs measure and $f$ be a local function. Let $n_0$ be such that $f$ is $\calF_{\Lambda_{n^\xi}}$-measurable for all $n\geq n_0$.\\
Now from the DLR equation~\eqref{DLR}, we get that, for all $n\geq 1$, and any function $g$,
\[
\mu(g) = \int \bk{g}_{\Lambda_n;\beta}^\omega\, d\mu(\omega).
\]
Theorem~\ref{thm_main} thus implies that, for some $\delta>0$ and uniformly in $\calF_{\Lambda_{n^\xi}}$-measurable functions $g$,
\begin{equation}\label{eq_DLR}
\mu(g)=  A_n\bk{g}^+_\beta + (1-A_n)\bk{g}^-_\beta+O_\beta\bigl( n^{-\delta}\normsup{g} \bigr),
\end{equation}
with $A_n=\int \alpha^{n,\omega}\, d\mu(\omega)$.
Applying this to the function $g=\sigma_0$, we deduce that
\[
\mu(\sigma_0) = ( 2A_n -1 )m^\star_\beta + O_\beta(n^{-\delta}),
\]
where we have introduced the \textsf{spontaneous magnetization} $m^\star_\beta\defby\bk{\sigma_0}^+_\beta$. This shows that
\[
A_n = \frac{m^\star_\beta + \mu(\sigma_0)}{2m^\star_\beta} + O_\beta(n^{-\delta}).
\]
Let us set $\alpha \defby (m^\star_\beta + \mu(\sigma_0)) / 2m^\star_\beta$.
Applying now~\eqref{eq_DLR} to the function $g=f$, we see that, for all $n>n_0$,
\[
 \mu(f) =  \alpha \bk{f}^+_\beta + (1-\alpha)\bk{f}^-_\beta +O_\beta(\normsup{f} n^{-\delta}).
\]
Letting $n$ tend to infinity, we conclude that $\mu(f) = \alpha \bk{f}^+_\beta + (1-\alpha)\bk{f}^-_\beta$. Since this holds for any local function $f$, it follows that $\mu=\alpha \mu^+_\beta + (1-\alpha)\mu^-_\beta$.
\end{proof}

\subsection{Proof of Proposition~\ref{prop_optimal}.}
Let us consider the box $\Lambda_n=\{-n,\ldots,n\}^2$ and the boundary condition $\omega_i=+1$ if and only if $i=(i_1,i_2)$ with $i_2>0$ (Dobrushin boundary condition). We denote the corresponding expectation by $\bk{\cdot}^\pm_{\Lambda;\beta}$.

The trick is to consider a local function $f$ for which the expectation $\bk{f}^+_\beta=\bk{f}^-_\beta=0$, since this trivializes the optimization over $\alpha$. 

Let $f_i(\omega)=\omega_{(0,i)}-\omega_{(0,i-1)}$, and 
\[
F(\omega)=\sum_{i=-\lfloor \Cl{C-opt1}n^{1/2}\rfloor+1}^{\lfloor \Cr{C-opt1}n^{1/2}\rfloor}f_i(\omega)=\omega_{(0, \lfloor \Cr{C-opt1}n^{1/2}\rfloor)}-\omega_{(0,-\lfloor \Cr{C-opt1}n^{1/2}\rfloor)},
\]
with $\Cr{C-opt1}$ a large constant, to be chosen below.
Thanks to translation invariance of $\mu^+$ and $\mu^-$, $\bk{f_i}^+_\beta=\bk{f_i}^-_\beta=0$, for all $i$, and thus $\bk{F}^+_\beta=\bk{F}^-_\beta=0$. Let us denote the only open contour by $\gamma$ and its endpoints $a$ and $b$. Let also
\[
\calS= \setof{(\tfrac12,j)\in\Lambda^\star}{|j|>\lfloor \Cr{C-opt1}n^{1/2}\rfloor},
\]
We then have
\begin{align*}
\bk{F}^{\pm}_{\Lambda,\beta}
&\geq
\bk{F \,|\, \gamma\cap\calS=\varnothing}^{\pm}_{\Lambda,\beta}\,\mu^{\pm}_{\Lambda,\beta}(\gamma\cap\calS=\varnothing)-2\mu^{\pm}_{\Lambda,\beta}(\gamma\cap\calS\neq\varnothing).
\end{align*}
{Now, FKG inequality implies that
\[
\bk{F \,|\, \gamma\cap\calS=\varnothing}^{\pm}_{\Lambda,\beta} \geq 2m^\star_\beta,
\]}
while, using~\eqref{eq_App_STI}, \eqref{eq_App_BK_bis} and Lemma~\ref{lem_OZ-2ptf}, we get
\begin{align*}
\mu^{\pm}_{\Lambda,\beta}(\gamma \cap \calS\neq \varnothing)
 &\leq \Cl{C-opt2} \sum_{z\in\calS} \frac {\sqrt{\vert a-b \vert}} {\sqrt{\vert a-z\vert}\sqrt{\vert z-b\vert}} e^{-(\tau_\beta(a-z)+\tau_\beta(z-b)-\tau_\beta(a-b))} \\
 &\leq \frac{\Cl{}}{\sqrt{n}}\sum_{k\geq \lfloor \Cr{C-opt1}\sqrt{n}\rfloor} e^{-\kappa_\beta k^2/2n}
\leq \Cl{23} e^{-\kappa_\beta \Cr{C-opt1}^2/2}.
\end{align*}
Since $|F|\leq 2$, we deduce from the above, choosing $\Cr{C-opt1}$ large enough, that
\[
\bk{F}^{\pm}_{\Lambda,\beta} \geq 2m^\star_\beta(1 - \Cr{23}e^{-\kappa_\beta\Cr{C-opt1}^2/2}) -2 \Cr{23}e^{-\kappa_\beta\Cr{C-opt1}^2/2}> \Cl{C-opt2} > 0,
\]
Now, $F$ being a sum of $2\lfloor\Cr{C-opt1} n^{1/2}\rfloor$ terms, there exists an index $j_0=j_0(n)$ such that 
\[
\bk{f_{j_0}}^\pm_{\Lambda,\beta}>\frac{\Cr{C-opt2}}{2\lfloor \Cr{C-opt1}n^{1/2}\rfloor}>\Cl{C-opt3}n^{-1/2},
\]
for some constant $\Cr{C-opt3}>0$.

At this point, we have very little control on the location of the support of $f_{j_0}$ inside $\Lambda$.
To remedy this, let $\Delta^{j_0}_n=(0,j_0)+\{-\lfloor n/2\rfloor,\ldots,\lfloor n/2\rfloor\}^2$.
Using DLR equation, we can write (the averaging being over $\omega$)
\[
\bk{f_{j_0}}^\pm_{\Lambda,\beta}= \bk{\,\bk{f_{j_0}}_{\Delta^{j_0}_n,\beta}^\omega\,}_{\Lambda,\beta}^\pm>\Cr{C-opt3}n^{-1/2},
\]
so that there exists an $\tilde\omega=\tilde\omega(n)$ for which
\[
\bk{f_{j_0}}_{\Delta^{j_0}_n,\beta}^{\tilde\omega}>\Cr{C-opt3}n^{-1/2}.
\]
This proves, albeit non-constructively, the existence of a constant $\Cl{C-opt4}>0$ and a sequence of boundary conditions $(\omega_m)_{m\geq 1}$ such that, for all $m$ large enough,
\begin{equation}\label{eq:proofopt}
\inf_{\alpha\in \left[0,1\right]} \vert \bk{f}_{\Delta_{m},\beta}^{\omega_m} -\alpha \bk{f}_\beta^+-(1-\alpha)\bk{f}_\beta^-\vert> \Cr{C-opt4} m^{-1/2},
\end{equation}
where $f(\omega) = \omega_{(0,1)} - \omega_{(0,0)}$ and $\Delta_m=\{-m,\ldots, m\}^2$.
\qed
\begin{remark}
We actually expect that~\eqref{eq:proofopt} is satisfied, for the same function $f$, with $\omega$ given by Dobrushin boundary condition.
\end{remark}

\appendix
\section{Some tools}\label{appendix}
In this appendix, we state, mostly without proof, properties and results that are used in our analysis.
\subsection{Surface tension}
Let $\vec n =(\cos\theta,\sin\theta)\in\bbS^1$. The \textsf{surface tension} $\tau_\beta$ in direction $\vec n$ is defined by
\[
\tau_\beta(\vec n) \defby - \lim_{N\to\infty}\frac {\cos\theta}{(2N+1)}\log \frac{ \bfZ_{\Lambda_N;\beta}^{\omega^{\vec n}} } { \bfZ_{\Lambda_N;\beta}^+ },
\]
where the boundary condition $\omega^{\vec n}$ is defined by $\omega^{\vec n}_i = 1$ if $\inprod{i}{\vec n}\geq 0$, and $\omega^{\vec n}_i = -1$ otherwise.

This limit is known to exist for all values of $\beta$. $\tau_\beta$ is positive for all $\beta>\betac$~\cite{LebPfi81} and is continuous (actually real analytic) as a function of $\vec n$~\cite{CamIofVel03}.

It is useful to extend $\tau_\beta$ to a function on $\bbR^2$ by positive homogeneity, setting $\tau_\beta(x) \defby \tau_\beta(\vec n_x)\normII{x}$, where $\vec n_x\defby x/\normII{x}$.
When $\beta>\betac$, the extended function is a norm on $\bbR^2$. Moreover, it satisfies the following \textsf{sharp triangle inequality}, which follows from a combination
of~\cite[Theorem~2.1]{PfVe99} and~\cite[Theorem~B]{CamIofVel03}: For any $\beta>\betac$, there exists a constant $\kappa_\beta>0$ such that
\begin{equation}\label{eq_App_STI}
\tau_\beta (x) + \tau_\beta (y) - \tau_\beta(x+y) \geq \kappa_\beta \bigl( \normII{x} + \normII{y} - \normII{x+y} \bigr), \quad \forall x,y \in \bbR^2.
\end{equation}

\subsection{Random-line representation}
A subset $A\subset\bbZ^2$ is said to be \textsf{(simply) connected} if $\bigcup_{i\in A} \bigl( i+[-\tfrac12,\tfrac12]^2\bigr)$ is (simply) connected.
Let $\Lambda\Subset\bbZ^2$ be simply connected. Let $\omega\in\{-1,1\}^{\bbZ^2}$ be some boundary condition.
To a configuration $\sigma$ compatible with this boundary condition, we associate the set $E(\sigma)$ of all edges of the dual lattice $(\bbZ^{2})^\star \defby(\tfrac12,\tfrac12)+\bbZ^2$ separating a pair $i,j$ of nearest-neighbor vertices such that $\{i,j\}\cap\Lambda\neq\varnothing$ and $\sigma_i\neq\sigma_j$. The set of edges $E(\sigma)$ can be decomposed into a families of self-avoiding lines by applying the following deformation rules at each vertex of the dual lattice at which more than two edges of $E$ meet:
\begin{center}
 \includegraphics[width=\textwidth]{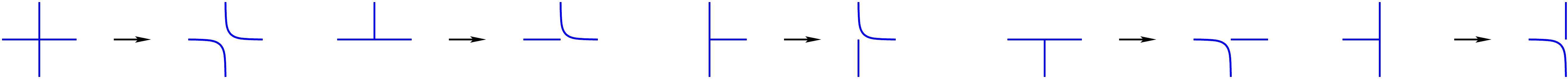}
\end{center}%
Each of these lines is called a \textsf{contour} of $\sigma$. Of particular interest to us are the \textsf{open contours} $\uG(\sigma)=(\Gamma_1(\sigma),\ldots,\Gamma_M(\sigma))$ of the configuration $\sigma$, i.e., the open lines. Observe that each of those has its two endpoints on $\partial^\star\Lambda$, the set of all vertices of $(\bbZ^{2})^\star$ that are at Euclidean distance $1/\sqrt{2}$ from both $\Lambda$ and $\Lambda^\comp$. The set $\ubo\equiv\{b_1,\ldots,b_{2M}\}$ of all endpoints of open contours is completely determined by the boundary condition $\omega$. The notation $\uG\sim\ubo$ means that the set of open contours $\uG$ is compatible with $\ubo$ (i.e., the set of endpoints of $\uG$ is $\ubo$). We also say that a family of open contours $\uG$ is \textsf{$(\omega,\Lambda)$-compatible} if there exists a configuration $\sigma$ in $\Lambda$, compatible with the boundary condition $\omega$, such that $\uG$ is the family of open contours of $\sigma$. We also sometimes use the notation $\Gamma:b\to b'$ in place of $\Gamma\sim\{b,b'\}$.

\smallskip
As a consequence of this particular choice of deformation rules, there is a natural notion of path of vertices of $\bbZ^2$: a sequence $x_1,x_2,\ldots,x_m$ of vertices of $\bbZ^2$ is an \textsf{s-path} if, for all $1\leq i< m$, either $x_i$ and $x_{i+1}$ are nearest-neighbors, or they are second-nearest-neighbors (i.e. at Euclidean distance $\sqrt{2}$ from each other) and oriented NW-SE.

\smallskip
One can define~\cite[(2.10) and Lemma~6.2]{PfVe99} nonnegative weights $q_{\Lambda;\beta}$ on families of open contours in the box $\Lambda$ in such a way that
\begin{equation}\label{eq_App_weight}
\frac{\bfZ_{\Lambda;\beta}^\omega}{\bfZ_{\Lambda;\beta}^+} = \sum_{\uG \sim \ubo} q_{\Lambda;\beta}(\uG) = \bk{\sigma_{b_1}\cdots\sigma_{b_{2M}}}_{\Lambda^\star;\beta^\star},
\end{equation}
where the dual box $\Lambda^\star\defby\setof{t\in(\bbZ^{2})^\star}{d(t,\Lambda)=1/ \sqrt{2}}$, $\beta^\star$ is defined through $\tanh\beta^\star=e^{-2\beta}$, and $\bk{\cdot}_{\Lambda^\star;\beta^\star}$ denotes expectation with respect to the finite-volume Gibbs measure in $\Lambda^\star$ at inverse temperature $\beta^\star$ with free boundary condition,
\[
\mu_{\Lambda^\star;\beta^\star} (\sigma) = \frac 1{\bfZ_{\Lambda^\star;\beta^\star}} \exp\Bigl( -\beta^\star \sum_{\substack{\{i,j\}\subset\Lambda^\star\\\normI{j-i}=1}} \sigma_i\sigma_j \Bigr).
\]
The last identity in~\eqref{eq_App_weight} is a manifestation of the self-duality of the 2d n.n.f.\ Ising model.

The weights $q_{\Lambda;\beta}$ have a number of remarkable properties that make them very useful in the analysis of contours. Here is a list of properties we use in this paper, with precise references to where a proof can be found.
\begin{itemize}
\item Let $i,j\in\partial^\star\Lambda$. Then~\cite[Lemma~6.6 and Prop.~2.4]{PfVe99} 
\begin{equation}\label{eq_App_upperBd}
\sum_{\Gamma:i\to j} q_{\Lambda;\beta} (\Gamma) \leq e^{-\tau_\beta (j-i)}.
\end{equation}
\item We associate to an $(\omega,\Lambda)$-compatible family of open contours the set $\frF(\Gamma_1,\ldots,\Gamma_n)$ of all vertices of $\Lambda$ whose spin value is completely determined by $\omega$ and these open contours, i.e., the maximal set such that, if $\sigma'$ is another configuration compatible with $\omega$ such that $\Gamma_1,\ldots,\Gamma_n \subset \uG(\sigma')$, then $\sigma'_i = \sigma_i$, for all $i\in\frF(\Gamma_1,\ldots,\Gamma_n)$. We set $\Lambda(\Gamma_1,\ldots,\Gamma_n) \defby \Lambda \setminus \frF(\Gamma_1,\ldots,\Gamma_n)$, and say that $\Gamma_1,\ldots,\Gamma_n$ \textsf{partition the box} $\Lambda$ into the connected components of $\Lambda(\Gamma_1,\ldots,\Gamma_n)$. We then have~\cite[Lemma~6.4]{PfVe99}
\begin{equation}\label{eq_App_weightFactorization}
q_{\Lambda;\beta}(\Gamma_1,\ldots,\Gamma_n,\Gamma_{n+1},\ldots,\Gamma_{m}) = q_{\Lambda;\beta}(\Gamma_1,\ldots,\Gamma_n) \, q_{\Lambda(\Gamma_1,\ldots,\Gamma_n);\beta}(\Gamma_{n+1},\ldots,\Gamma_m),
\end{equation}
for all $(\omega,\Lambda)$-compatible family $\Gamma_1,\ldots,\Gamma_m \subset \uG(\sigma)$ of open contours.
\item Let $\ub_1,\ub_2$ be two disjoint subsets of even cardinality of $\partial^\star\Lambda$. The weights satisfy the following BK-type inequality~\cite[Lemma~6.5]{PfVe99},
\begin{equation}\label{eq_App_BK}
\sum_{\substack{\uG_1\sim \ub_1,\uG_2\sim\ub_2\\(\uG_1,\uG_2) \sim \ub_1\cup\ub_2}} q_{\Lambda;\beta} (\uG_1,\uG_2) \leq \sum_{\uG_1\sim \ub_1} q_{\Lambda;\beta} (\uG_1) \sum_{\uG_2\sim \ub_2} q_{\Lambda;\beta} (\uG_2).
\end{equation}
\item Let $z\in\Lambda^\star$; we write $\Gamma:b\to z\to b'$ when $\Gamma:b\to b'$ and $\Gamma\ni z$. Then, as follows from~\cite[Lemma~6.5]{PfVe99} and~\cite[Theorem~A]{CamIofVel03},
\begin{equation}\label{eq_App_BK_bis}
\sum_{\substack{\Gamma:b\to z\to b'}} q_{\Lambda;\beta} (\Gamma) \leq \bk{\sigma_b\sigma_{z}}_{\Lambda^\star;\beta^\star}\bk{\sigma_z\sigma_{b'}}_{\Lambda^\star;\beta^\star} \leq \frac{\Cl{ccc13}(\beta)}{\sqrt{\normII{z-b}\normII{z-b'}}} e^{-\tau_\beta(z-b)-\tau_\beta(z-b')}.
\end{equation}
\item Let $\ub$ be a subset of even cardinality of $\partial^\star\Lambda$, and $b_1,b'_1,b_2,b'_2$ four distinct vertices of $\ub$. Let also $A_1 \subset \bigl\{\Gamma:b_1\to b'_1\bigr\}$ and $A_2 \subset \bigl\{\Gamma:b_2\to b'_2\bigr\}$. It follows easily from~\eqref{eq_App_weight} and~\eqref{eq_App_weightFactorization} that
\begin{equation}\label{eq_App_marginal}
\sum_{\substack{\Gamma_1\in A_1, \Gamma_2\in A_2,\uG\\(\Gamma_1,\Gamma_2,\uG)\sim \ub}} q_{\Lambda;\beta} (\Gamma_1,\Gamma_2,\uG) \leq \sum_{\substack{\Gamma_1\in A_1, \Gamma_2\in A_2\\(\Gamma_1,\Gamma_2)\sim\{b_1,b'_1,b_2,b'_2\}}} q_{\Lambda;\beta} (\Gamma_1,\Gamma_2) .
\end{equation}
\end{itemize}

\subsection{Spatial relaxation in pure phases}
Another result that plays an important role in our analysis is the following exponential relaxation result: Let $\Lambda\subset\bbZ^2$. Then~\cite{BriLebPfi81,ChaChaSch87}, for any $\beta>\betac$, there exists $\Cl{c_relax}(\beta)>0$ such that, uniformly for any local function $f$ with support $S(f)$ inside $\Lambda$,
\begin{equation}\label{eq_App_expRelax}
\bigl| \bk{f}_{\Lambda;\beta}^+ - \bk{f}_{\beta}^+ \bigr| \leq \normsup{f}\,|S(f)|\, e^{-\Cr{c_relax} d(S(f),\Lambda^\comp)}.
\end{equation}
Notice that even though the authors of~\cite{BriLebPfi81,ChaChaSch87} rely on the exact solution to guarantee, respectively, exponential decay of the truncated 2-point function for $\beta>\betac$ or exponential decay of the 2-point function for $\beta<\betac$, one can instead, in both cases, use the positivity of surface tension for $\beta>\betac$ proved in~\cite{LebPfi81} (in the first case, by proving that the truncated 2-point function $\bk{\sigma_0;\sigma_x}^+_\beta$ is bounded above by the probability that $0$ and $x$ are surrounded by a contour; in the second case, by using the fact that the rate of exponential decay of $\bk{\sigma_0\sigma_x}_\beta$ is equal to the surface tension $\tau_\beta(\vec n_x)$, by duality).

\subsection{Finite-volume corrections to $\tau_\beta$}
The next two lemmas provide informations on the finite-volume corrections to the surface tension $\tau_\beta$ and play a crucial role in our analysis.

The first lemma provides a lower bound for the ratio of partition functions in a square box, when the endpoints are not both simultaneously close to one side of the box (in which case, the prefactor would change). With slightly more work, this lower bound can be replaced by full Ornstein-Zernike asymptotics, using a variant of~\cite{CamIofVel03} similarly to what is done in~\cite{GreIof05}.
\begin{lemma}\label{lem_OZ-2ptf}
Let $\beta>\betac$. Then there exists a constant $\Cl{c-OZlb}>0$ such that, uniformly as $n\to\infty$,
\[
\frac{\bfZ_{\Lambda_n;\beta}^{\pm(i,j)}} {\bfZ_{\Lambda_n;\beta}^+} \geq \Cr{c-OZlb}n^{-1/2}\, e^{-\tau_\beta (j-i)},
\]
uniformly in vertices $i,j\in\partial^\star\Lambda_n$ such that the segment $\overline{ij}$ intersects the box $[-n/2,n/2]^2$.
\end{lemma}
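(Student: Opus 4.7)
The plan is to combine Kramers-Wannier duality with the sharp Ornstein-Zernike asymptotics of Campanino-Ioffe-Velenik. Since the boundary condition $\pm(i,j)$ forces exactly one open contour, with endpoints $\{i,j\}$, identity~\eqref{eq_App_weight} gives
\[
\frac{\bfZ_{\Lambda_n;\beta}^{\pm(i,j)}}{\bfZ_{\Lambda_n;\beta}^+} = \sum_{\Gamma : i \to j} q_{\Lambda_n;\beta}(\Gamma) = \bk{\sigma_i \sigma_j}_{\Lambda_n^\star;\beta^\star}^{\rm free}.
\]
Since $\beta>\betac$ is equivalent to $\beta^\star<\betac$, I am reduced to establishing a lower bound of order $n^{-1/2}e^{-\tau_\beta(j-i)}$ on the two-point function of a \emph{subcritical} $2$d Ising model with free boundary condition in $\Lambda_n^\star$, whose inverse correlation length in direction $\vec n_x$ is, by duality, precisely $\tau_\beta(\vec n_x)$.

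For the infinite-volume model, the sharp OZ asymptotics of~\cite{CamIofVel03} read
\[
\bk{\sigma_0\sigma_x}_{\beta^\star} = \Psi_{\beta^\star}(\vec n_x)\,\normII{x}^{-1/2}\,e^{-\tau_\beta(x)}\,(1+o(1)),
\]
with an amplitude $\Psi_{\beta^\star}$ continuous and strictly positive on $\bbS^1$. Since the hypothesis on $\overline{ij}$ entails $\normII{j-i}\leq 4\sqrt 2\,n$, this already produces the required $n^{-1/2}$ prefactor for the infinite-volume two-point function between $i$ and $j$. The crux is therefore to transfer this lower bound to the \emph{finite-volume, free-BC, boundary-to-boundary} two-point function.

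For the transfer I would invoke the random-walk representation behind~\cite{CamIofVel03}: after the OZ change of measure, the open contour connecting $i$ to $j$ decomposes as a concatenation of irreducible pieces whose endpoints perform an effective random walk with exponential-tail step distribution, drift in direction $\vec n_{j-i}$, and transverse fluctuations of order $\sqrt{\normII{j-i}}$. The geometric hypothesis that $\overline{ij}$ intersects $[-n/2,n/2]^2$ guarantees that, apart from small neighborhoods of its endpoints, $\overline{ij}$ stays at distance of order $n$ from the boundary of $\Lambda_n^\star$; a standard invariance-principle/Gaussian-tube estimate then shows that the probability that the OZ-typical trajectory remains confined to $\Lambda_n^\star$ is bounded below uniformly in $n$ and in the admissible pair $(i,j)$. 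This produces
\[
\bk{\sigma_i\sigma_j}_{\Lambda_n^\star;\beta^\star}^{\rm free} \geq c\,\bk{\sigma_i\sigma_j}_{\beta^\star},
\]
and, combined with the infinite-volume OZ lower bound above, concludes the argument.

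The main obstacle is the confinement step: the Campanino-Ioffe-Velenik machinery is developed in infinite volume, and adapting it to a finite box with free boundary condition and with both endpoints \emph{on} the boundary requires either a careful boundary-adapted irreducible decomposition, or an explicit construction producing a single path from $i$ to $j$ living in a tube of width $\sqrt n$ around $\overline{ij}$, with weight estimated from below via the sharp triangle inequality~\eqref{eq_App_STI} together with a cluster-expansion estimate for the partition function of the fluctuations transverse to $\overline{ij}$. The geometric condition on $\overline{ij}$ is exactly what makes room inside $\Lambda_n^\star$ for such a tube.
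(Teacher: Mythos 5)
Your overall strategy --- rewrite the ratio of partition functions via~\eqref{eq_App_weight} and extract the $n^{-1/2}e^{-\tau_\beta(j-i)}$ lower bound from Ornstein--Zernike theory --- is the same as the paper's, and you correctly locate the difficulty. But the step you defer, namely the confinement estimate
\[
\bk{\sigma_i\sigma_j}_{\Lambda_n^\star;\beta^\star}\geq c\,\bk{\sigma_i\sigma_j}_{\beta^\star},
\]
is the entire content of the lemma, and it is not a ``standard invariance-principle/Gaussian-tube estimate'': by GKS the inequality goes the wrong way for free; it genuinely fails with a uniform $c$ when $i$ and $j$ are both close to one side of $\Lambda_n$ (the paper remarks on this right after the lemma); and the machinery of~\cite{CamIofVel03} is built for infinite volume with neither endpoint on a boundary. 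As written, your proposal reduces the lemma to an unproved statement of comparable depth; the two escape routes you list at the end are the right ones, but neither is carried out.

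For comparison, the paper closes this gap as follows. It stays with the contour weights rather than the dual spin--spin function: by monotonicity of the weights in the volume (\cite[Lemma~6.3]{PfVe99}), $\sum_{\gamma:i\to j,\,\gamma\subset\Lambda_n}q_{\Lambda_n;\beta}(\gamma)\geq\sum_{\gamma:i\to j,\,\gamma\subset\Lambda_n}q_{\beta}(\gamma)$, so only the \emph{infinite-volume} weights restricted to paths inside $\Lambda_n$ need a lower bound. It then splits according to the angle between $\overline{ij}$ and the diagonals of $\Lambda_n$. If that angle exceeds $\pi/5$ for both diagonals, then $i,j$ lie on opposite sides and far from the other two, so \cite[Lemma~6.10]{PfVe99} replaces confinement in $\Lambda_n$ by confinement in the full strip $\Delta_n$, for which the boundary-to-boundary OZ asymptotics are precisely the content of~\cite{GreIof05}. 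Otherwise $\overline{ij}$ is nearly diagonal, and one reruns the argument of~\cite{CamIofVel03} with a forward cone narrow enough to fit inside a quadrant: the $\Lambda_n$-compatibility constraint then only affects the two extremal irreducible pieces, which does not change the derivation. If you want to complete your write-up along your own lines, this case analysis (or an equivalent boundary-adapted irreducible decomposition) is what has to be supplied; the uniform tube picture by itself does not handle the endpoints, which sit on $\partial^\star\Lambda_n$ where no tube of width $\sqrt{n}$ fits.
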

\begin{proof}
Using~\cite[Lemma~6.3]{PfVe99}, we can replace the weights $q_{\Lambda_n;\beta}$ by their infinite-volume counterparts $q_\beta$,
\[
\frac{\bfZ_{\Lambda_n;\beta}^{\pm(i,j)}} {\bfZ_{\Lambda_n;\beta}^+} = \sum_{\substack{\gamma:i\to j\\\gamma\subset\Lambda_n}} q_{\Lambda_n;\beta}(\gamma) \geq \sum_{\substack{\gamma:i\to j\\\gamma\subset\Lambda_n}} q_{\beta}(\gamma),
\]
where the condition $\gamma\subset\Lambda_n$ means that all edges of $\gamma$ must have their endpoints in $\Lambda_n^\star$.
We consider two cases.\\
\emph{Case 1:} the angle between the segment $\overline{ij}$ and each diagonal of the square $\Lambda_n$ is greater than $\pi/5$.

In that case, $i$ and $j$ must be on opposite sides of $\Lambda_n$ and at a distance at least $n/4$ from the two other sides. For definiteness, let us assume that $i,j$ are on the two vertical sides of the box. Let $\Delta_n = \{-n,\ldots,n\}\times\bbZ$ be the vertical strip of width $2n+1$ centered at $0$. It follows from~\cite[Lemma~6.10]{PfVe99} that
\[
\sum_{\substack{\gamma:i\to j\\\gamma\subset\Lambda_n}} q_{\beta}(\gamma) \geq (1-o(1))\, \sum_{\substack{\gamma:i\to j\\\gamma\subset\Delta_n}} q_{\beta}(\gamma).
\]
Now, the required bound follows from the Ornstein-Zernike asymptotics derived in~\cite{GreIof05}.\\
\emph{Case 2:} the angle between the segment $\overline{ij}$ and one of the diagonals of the square $\Lambda_n$ is smaller than $\pi/5$.

In this case, one can easily adapt the proof of Ornstein-Zernike asymptotics given in~\cite{CamIofVel03}: Taking a forward-cone (see the latter paper for definition) of sufficiently small opening to ensure that it is contained in the cone $\setof{x=(x_1,x_2)\in\bbR^2}{x_1\geq 0, x_2\geq 0}$, we see that the constraint that $\gamma$ be $\Lambda_n$-compatible only affects the left-most and right-most irreducible pieces. This has no impact on the derivation in~\cite{CamIofVel03}.
\end{proof}
When the endpoints $i$ and $j$ both lie too close to one of the sides of $\Lambda_n$, the above result does not apply (and is actually incorrect in general). It turns out that, for our purposes in this paper, the following rough lower bound is sufficient.
\begin{lemma}\label{lem_OZ-rough}
Let $\beta>\betac$. Then, for any $1/2<\rho<1$, there exists a constant $\Cl{c_rough}=\Cr{c_rough}(\beta)$ such that, for all $i,j\in\partial^\star\Lambda_n$,
\[
\frac{\bfZ_{\Lambda_n;\beta}^{\pm(i,j)}} {\bfZ_{\Lambda_n;\beta}^+} \geq e^{-\Cr{c_rough} n^{\rho}}\, e^{-\tau_{\beta} (j-i)}.
\]
\end{lemma}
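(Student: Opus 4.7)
My plan is to reduce the general case to Lemma~\ref{lem_OZ-2ptf} by an auxiliary geometric construction, absorbing the loss as a subexponential error $e^{-\Cr{c_rough}n^\rho}$. Lemma~\ref{lem_OZ-2ptf} already gives the much stronger bound $cn^{-1/2}e^{-\tau_\beta(j-i)}$ whenever the segment $\overline{ij}$ meets the central sub-box $[-n/2,n/2]^2$, so all the work takes place in the complementary regime; by the symmetries of the square I may then assume that both $i$ and $j$ lie on the top side of $\partial^\star\Lambda_n$.

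I split the argument into a short-range and a long-range subcase. If $|j-i|\le n^\rho$, I exhibit a single admissible path $\gamma\subset\Lambda_n^\star$ of length $O(n^\rho)$ running along the top boundary from $i$ to $j$. Using the monotonicity $q_{\Lambda_n;\beta}(\gamma)\ge q_\beta(\gamma)$ of~\cite[Lemma~6.3]{PfVe99} together with the trivial lower bound $q_\beta(\gamma)\ge e^{-C(\beta)|\gamma|}$, this yields
\[
\frac{\bfZ^{\pm(i,j)}_{\Lambda_n;\beta}}{\bfZ^{+}_{\Lambda_n;\beta}} \ge q_\beta(\gamma) \ge e^{-Cn^\rho},
\]
which suffices because $\tau_\beta(j-i)\le C'|j-i|\le C'n^\rho$ in this regime.

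In the main subcase $|j-i|>n^\rho$ I introduce an auxiliary vertex $k$ in the interior of $\Lambda_n^\star$, placed at perpendicular Euclidean distance exactly $n^\rho$ below the midpoint of $\overline{ij}$. Since $\rho<1$, $k$ is well inside the dual box for $n$ large, and a direct Pythagorean computation combined with the Lipschitz continuity of $\tau_\beta$ with respect to $\normII{\cdot}$ yields the key geometric inequality
\[
\tau_\beta(k-i)+\tau_\beta(k-j) \le \tau_\beta(j-i)+Cn^\rho.
\]
Using the duality~\eqref{eq_App_weight} I rewrite $\bfZ^{\pm(i,j)}_{\Lambda_n;\beta}/\bfZ^{+}_{\Lambda_n;\beta}=\bk{\sigma_i\sigma_j}^{\rm free}_{\Lambda_n^\star;\beta^\star}$, then split $\sigma_i\sigma_j=(\sigma_i\sigma_k)(\sigma_k\sigma_j)$ and apply the Griffiths inequality in the dual Ising model at $\beta^\star$ to obtain
\[
\bk{\sigma_i\sigma_j}^{\rm free}_{\Lambda_n^\star;\beta^\star} \ge \bk{\sigma_i\sigma_k}^{\rm free}_{\Lambda_n^\star;\beta^\star}\cdot\bk{\sigma_k\sigma_j}^{\rm free}_{\Lambda_n^\star;\beta^\star}.
\]
Each factor is then lower-bounded by an Ornstein--Zernike-type estimate: since $\rho>1/2$, the characteristic OZ fluctuations $O(\sqrt{|k-i|})=O(\sqrt n)$ of the random-walk representation of~\cite{CamIofVel03} are much smaller than $n^\rho$, so a positive fraction of the representative walks stay inside $\Lambda_n^\star$, yielding a bound of the form $c|k-i|^{-A}e^{-\tau_\beta(k-i)}$. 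Combined with the geometric estimate above this delivers $\bfZ^{\pm(i,j)}_{\Lambda_n;\beta}/\bfZ^{+}_{\Lambda_n;\beta}\ge e^{-\Cr{c_rough}n^\rho}e^{-\tau_\beta(j-i)}$.

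The main technical obstacle I foresee is exactly this last step: lower-bounding the finite-volume dual two-point function when one endpoint sits on $\partial^\star\Lambda_n$ and the other is an interior vertex. The Ornstein--Zernike theory of~\cite{CamIofVel03} is formulated in infinite volume, so a half-plane / boundary-aware adaptation is required. A variant that stays within the tools already developed in the paper is to reduce to a thin sub-rectangle $R\subset\Lambda_n$ adjacent to the top side and of width $\sim n^\rho$, using the monotonicity $\bfZ^{\pm(i,j)}_R/\bfZ^+_R\le\bfZ^{\pm(i,j)}_{\Lambda_n}/\bfZ^+_{\Lambda_n}$ (which itself follows from GKS in the dual model), and then inserting two closely spaced auxiliary endpoints on $\partial^\star R$ so that~\eqref{eq_App_weightFactorization} and~\eqref{eq_App_marginal} reduce the estimate to two applications of Lemma~\ref{lem_OZ-2ptf} inside sub-squares of $R$.
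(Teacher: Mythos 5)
Your overall strategy (dualize to the free-boundary two-point function at $\beta^\star$, split it with the Griffiths inequality, control the surface-tension cost of the detour via the triangle inequality for the norm $\tau_\beta$, and absorb all losses into $e^{-Cn^\rho}$) is the right one, and both your short-range case and the geometric estimate $\tau_\beta(k-i)+\tau_\beta(k-j)\le\tau_\beta(j-i)+Cn^\rho$ are fine. But the step you yourself flag as the ``main technical obstacle'' is a genuine gap, not a technicality: after splitting at a point $k$ below the chord, \emph{both} factors $\bk{\sigma_i\sigma_k}_{\Lambda_n^\star;\beta^\star}$ and $\bk{\sigma_k\sigma_j}_{\Lambda_n^\star;\beta^\star}$ still have an endpoint sitting on $\partial^\star\Lambda_n$, and none of the tools invoked in the paper yields a lower bound of the form $\mathrm{poly}(n)^{-1}e^{-\tau_\beta(\cdot)}$ in that situation: the Ornstein--Zernike asymptotics of~\cite{CamIofVel03} are an infinite-volume statement, and the finite-volume comparison of~\cite[Lemma~6.10]{PfVe99} requires the endpoints to be far, on the scale $\sqrt{n}$ of the transverse fluctuations, from $\partial\Lambda_n$. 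Your fallback via a thin rectangle $R$ of width $n^\rho$ and Lemma~\ref{lem_OZ-2ptf} does not repair this: that lemma requires the segment joining the two endpoints to pass through the central sub-box, whereas here the segment hugs one side of $R$ (and of any sub-square of $R$ containing a piece of it), which is exactly the regime the lemma excludes and in which its conclusion can fail.

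The paper closes precisely this gap by choosing a different decomposition. Instead of one interior point below the midpoint, it takes $i',j'\in\Lambda_{n-n^{\rho}}$ closest to $i,j$ and writes, by GKS, $\bk{\sigma_i\sigma_j}\ge\bk{\sigma_i\sigma_{i'}}\bk{\sigma_{i'}\sigma_{j'}}\bk{\sigma_{j'}\sigma_j}$. The two boundary-to-interior factors are short (length $O(n^\rho)$) and are bounded below by the crude path estimate $e^{-C(\beta)n^\rho}$ --- an affordable loss, and the same bound you already use in your short-range case. The remaining long factor now has \emph{both} endpoints at distance $n^\rho\gg\sqrt n$ from the boundary (this is where $\rho>1/2$ enters), so \cite[Lemma~6.10]{PfVe99} replaces it by the infinite-volume two-point function, to which the Ornstein--Zernike asymptotics apply directly; continuity of $\tau_\beta$ in the direction then converts $\tau_\beta(j'-i')$ into $\tau_\beta(j-i)+O(n^\rho)$. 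If you replace your single auxiliary point $k$ by this inward displacement of both endpoints, your argument becomes complete, and no boundary-aware version of Ornstein--Zernike theory, and no appeal to Lemma~\ref{lem_OZ-2ptf}, is needed anywhere.
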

\begin{proof}
First, by~\eqref{eq_App_weight},
\[
\frac{\bfZ_{\Lambda_n;\beta}^{\pm(i,j)}} {\bfZ_{\Lambda_n;\beta}^+} = \bk{\sigma_i \sigma_j}_{\Lambda_n^\star;\beta^\star}.
\]
Let $i',j'\in\Lambda_{n-n^{\rho}}$ be the two vertices closest to $i$ and $j$. Then, by the GKS inequality,
\[
\bk{\sigma_i \sigma_j}_{\Lambda_n^\star;\beta^\star} \geq \bk{\sigma_i \sigma_{i'}}_{\Lambda_n^\star;\beta^\star} \bk{\sigma_{i'} \sigma_{j'}}_{\Lambda_n^\star;\beta^\star} \bk{\sigma_{j'} \sigma_{j}}_{\Lambda_n^\star;\beta^\star}.
\]
On the one hand, it follows from the GKS inequality that 
\[\bk{\sigma_i \sigma_{i'}}_{\Lambda_n^\star;\beta^\star}\bk{\sigma_{j'} \sigma_{j}}_{\Lambda_n^\star;\beta^\star} \geq e^{-\Cl{ccc1}(\beta) n^{\rho}}.
\]
On the other hand, it follows from~\cite[Lemma~6.10]{PfVe99} and our choice of $\rho$ that 
\[\bk{\sigma_{i'} \sigma_{j'}}_{\Lambda_n^\star;\beta^\star} = (1+o(1))\, \bk{\sigma_{i'} \sigma_{j'}}_{\beta^\star}\geq \Cl{ccc555} \normII{j'-i'}^{-1/2}\, e^{-\tau_\beta(j'-i')},
\]
 since the infinite-volume 2-point function admits Ornstein-Zernike asymptotics~\cite[Theorem~A]{CamIofVel03}. It then follows from the continuity of $\tau_\beta$ as a function of the direction that
\[
\bk{\sigma_{i'} \sigma_{j'}}_{\Lambda_n^\star;\beta^\star} \geq e^{-\Cl{ccc2}(\beta) n^{\rho}}\, e^{-\tau_{\beta} (j-i)}.
\]
\end{proof}

\paragraph{\textbf{Acknowledgments.}}
We would like to thank Hugo Duminil-Copin, Dima Ioffe and Charles Pfister for comments and encouragements. Support from the Swiss National Science Foundation is also gratefully acknowledged.

\bibliographystyle{siam}
\bibliography{AH-Ising}
 
\end{document}